\newcolumntype{x}[1]{!{\centering\arraybackslash\vrule width #1}}
\definecolor{darkblue}{RGB}{0,0,160}
\newcommand{\eg}{e.\,g.,\xspace}
\newcommand{\ie}{i.\,e.,\xspace}
\newcommand{\cf}{c.\,f.,\xspace}
\newcommand{\eat}[1]{}
\newcolumntype{E}{>{\footnotesize \selectfont}l<{}} 
\newcolumntype{F}{>{\small\selectfont}l<{}}
\newcommand{\N}{\mathbb{N}}
\newcommand{\R}{\mathbb{R}}
\DeclareMathOperator{\pnt}{\raise 0.5mm \hbox{\large\bf.}}
\newtheoremstyle{thm}{}{}
     {\em}
     {}
     {\bf}
     {.}
     {0.5em}
     {\thmname{#1}\thmnumber{ #2}\thmnote{ #3}}
\newtheoremstyle{def}{}{}
     {\rm}
     {}
     {\bf}
     {.}
     {0.5em}
     {\thmname{#1}\thmnumber{ #2}\thmnote{ #3}}
\theoremstyle{thm}
\newtheorem{thm}{Theorem}[section]
\newtheorem{lem}[thm]{Lemma}
\newtheorem{cor}[thm]{Corollary}
\theoremstyle{def}
\newtheorem{rem}[thm]{Remark}
\newtheorem{exa}[thm]{Example}
\newtheorem{Def}[thm]{Definition}
\newtheorem{Alg}[thm]{Algorithm}
\let\epsilon=\varepsilon
\let\phi=\varphi
\let\kappa=\varkappa
\title[Simplicial complexes in network intrusion profiling]{
Simplicial complexes in network intrusion profiling
}
\author{Mandala von Westenholz}
\address{Osnabr\"uck University\\ Osnabr\"uck, Germany}
\email{mvonwestenho@uos.de}
\author{Martin Atzmueller}
\address{Osnabr\"uck University \& German Research Center for Artificial Intelligence (DFKI)\\ Osnabr\"uck, Germany}
\email{martin.atzmueller@uos.de}
\author{Tim R\"omer}
\address{Osnabr\"uck University\\ Osnabr\"uck, Germany}
\email{troemer@uos.de}
\subjclass[2020]{Primary: 05E45, 68R05; Secondary: 05C82, 68R10}
\keywords{simplicial complex, network intrusion profiling, simplicial patterns}
\begin{document}
\begin{abstract} 
For studying intrusion detection data we consider data points referring to individual IP addresses and their connections: We build networks associated with those data points, such that vertices in a graph are associated via the respective IP addresses, with the key property that attacked data points are part of the structure of the network. More precisely, we propose a novel approach using simplicial complexes to model the desired network and the respective intrusions in terms of simplicial attributes thus generalizing previous graph-based approaches. Adapted network centrality measures related to simplicial complexes yield so-called patterns associated to vertices, which themselves contain a set of features. These are then used to describe the attacked or the attacker vertices, respectively. Comparing this new strategy with classical concepts demonstrates the advantages of the presented approach using simplicial features for detecting and characterizing intrusions.
\end{abstract}

\maketitle

\section{Introduction} 
\label{Section: Introduction}

Network intrusion detection (\eg~\cite{Mukherjee, Sommer}) is a prominent and important research direction due to growing challenges in cyber security, \eg relating to the risk for everybody that personal data will be assaulted (\eg \cf~\cite{Rosenberg,walters2021cyber}) as well as increased cyber security measures in general~\cite{tirumala2019survey}. Hence, it is of particular interest to enhance our understanding for making sense of such situations and data, \eg by identifying and considering groups of special IP addresses like the ones that are attacked as well as the attackers themselves during a series of intrusion assaults. For that, we apply graph-based methods. In general, graph-based approaches have found various applications in computer science, mathematics, and neighbouring disciplines (see, \eg~\cite{Alippi, Bian, Cheng}).
In~\cite{Atzmüller} such an idea was used to model network intrusion detection data, in order to perform group-based analysis of critical situations, \ie specific network intrusion events. More precisely, IP addresses correspond to vertices of an associated directed graph modeled using the network intrusion data. In this graph, there is an edge between two vertices whenever there exists any kind of data exchange between the respective IP addresses. 
Given such a graph, the authors in~\cite{Atzmüller} aimed to find patterns associated to vertices, which are themselves sets of well-chosen features of the IP addresses and the respective nodes. The goal is to find common properties (\ie sets of features) of groups of vertices that best characterize groups of IP addresses of interest as good as possible, such as the ones related to attackers or attacked data points. For this purpose, graph centrality measures are used to construct specific features as properties of interesting subsets of the vertices of the graph. It turns out that graph-based patterns described in~\cite{Atzmüller} describe critical situations of IP addresses much better than non graph-based patterns related to classical constructions. 
However, there is room for improvement, since graphs only enable the detection of possible pairwise interactions between IP addresses. In contrast, higher order structures like the interaction of three or more addresses simultaneously are then not directly accessible. Motivated by the latter observation, there is the need for generalizing the graph-based approach from above in a suitable way, which still is simple enough for practical application. Simplicial complexes are fulfilling this aim, and that is why we propose these for modeling, analysis and interpretation. Recall that given a set of vertices $V$, such a simplicial complex is a set of subsets of $V$, called \emph{faces}, which is closed under taking subsets. Geometrically each of these subset corresponds to the vertices of a simplex in a given real vector space. In this work, we usually use \emph{Vietoris-Rips complexes} (see, \eg in general~\cite{ Edelsbrunner, Gilbert, Hug, Penrose} and for specific contexts, \eg~\cite{Akinwande, Reitzner, Grygierek}) where $V \subset \mathbb{R}^n$ and a set of vertices induces a face if and only if pairwise each two vertices are close enough with respect to a given metric. Given such a simplicial complex we can associate patterns of features to its vertices using the simplicial structure of the complex.
Such a simplicial approach turns out to be effective for describing and studying the structure of a network based on faces in various dimensions. For example, we might be interested in selecting a facet of the highest dimension containing a given vertex $v$ as a feature of a pattern. Then, $v$ is connected to the other vertices of this facet; in this way, we can then describe the connectivity of $v$ concerning the whole network. 
Observe, that graphs appear exactly as $1$-dimensional skeletons of simplicial complexes, only containing the faces of dimensions $0$ and $1$ -- therefore only providing a 1-dimensional perspective. In contrast, our new approach generalizes the methods in~\cite{Atzmüller} 
using simplices in given complexes of higher dimensions. We focus on centrality measures related to simplicial complexes to yield new patterns associated to vertices, which are not visible in the world of graphs.

The rest of the paper is structured as follows: Section~\ref{Section: preliminaries} presents relevant basics on simplicial complexes. Next, Section~\ref{Section: Adjacencies} provides suitable notions of adjacencies and degrees of faces of simplicial complexes. Note, that there are several ways adjacencies can be defined (see~\cite{Serrano}). In this paper, we use the variant generalizing  graph adjacencies. With these definitions so-called centralities for vertices are constructed in Section~\ref{Section: Simplicial centrality measures}, which are generalizations of graph-based centrality measures. In Section~\ref{section: Building patterns}, it is outlined how we can construct simplicial patterns with features, in particular, those ones relying on simplicial centrality measures. Next, purely graph-based patterns and simplicial ones are analyzed. For this, quality functions are introduced
which compare  measures for vertices of interest (\eg attackers in the network) in relation to all vertices, and
measures for vertices of interest inside the support of patterns in relation to its complete support. It turns out that simplicial patterns based in high dimensional structures are significantly better for several complex networks than purely graph-based patterns as studied in~\cite{Atzmüller}. We conclude the paper with an outlook in Section~\ref{Section: Outlook}, presenting a discussion with respect to further work and interesting research directions how to investigate simplicial patterns as related to a network further.
\section{Preliminaries} 
\label{Section: preliminaries}
In this section definitions related to simplicial complexes are given.
%, which are used all over the manuscript. 
For more details, see \eg~\cite{Munkres, BobrowskiKahle, Kahle, Stanley}.

\begin{Def} \label{Def: abstract simplicial complex, k-simplex}
Let $V = \{v_1, \ldots,\, v_{m} \}$ be a finite set. A family $\Delta $ of subsets of $V$ is an \textit{abstract simplicial complex}, if $\text{for any }\sigma \in \Delta \text{ and }  \tau \subseteq \sigma\text{ it holds }  \tau \in \Delta.$
For $0 \leq k \leq m-1$ an element $\sigma \in \Delta$ with $|\sigma | =k+1$ is called an \textit{abstract k-simplex}. 
\end{Def}
For brevity, we often skip the word ``abstract'' for the corresponding objects from now on. The highest dimension $k$ of a simplex in a simplicial complex is the \emph{dimension} of the complex. Observe, that every simplicial complex can be realized as a so-called \textit{geometric simplicial complex} in $\R^n$ with sufficiently large $n$.

\begin{exa} \label{Exa: simplicial complex}
    The (abstract) simplicial complex
    \begin{align*}
       \{&\emptyset, \{x_1\}, \{x_2\}, \{x_3\}, \{x_4\} , \{x_5\}, \{x_1, x_2\}, \{x_1, x_3\}, \{x_2, x_3\},\{x_4, x_5\}, \{x_2, x_5 \},  \{x_2, x_4 \}, \{x_1,x_2,x_3 \}, \{x_4, x_5, x_2 \} \} 
    \end{align*}

    can be realized in $\R^2$. A possible realization is illustrated in the next picture below:
    \begin{center}
\begin{tikzpicture}  

\draw[step=0.5, gray!30] (0,0) grid (5,2);
\foreach \x in {0,...,1}
{
\node[text=gray!30, left] at (0,\x) {$\x$};
\node[text=gray!30, below] at (\x,0) {$\x$};
}
\foreach \x in {3,...,5}
{
\node[text=gray!30, below] at (\x,0) {$\x$};
}
\coordinate (A) at (1,0); 
\coordinate (C) at (1.5,1.5); 
\coordinate (F) at (4.5,1.5); 
\coordinate (E) at (3.5,0.5); 
\coordinate (D) at (2.5,0.5); 
\coordinate (L) at (14.5,0.5); 
%\coordinate (B) at (10.5,10.5);
\coordinate (I) at (3.5,1.5); 
\coordinate (J) at (4.5,0.5);
\coordinate (G) at (3.5,1.5); 
\coordinate (H) at (2.5,0.5); 

\node[font=\bfseries] (3) at (0.8,0.3) {$x_1$};
\node[font=\bfseries] (3) at (2.5,0.8) {$x_2$};
\node[font=\bfseries] (3) at (1.7,1.8) {$x_3$};
\node[font=\bfseries] (3) at (3.7,1.8) {$x_4$};
\node[font=\bfseries] (3) at (4.8,1.8) {$x_5$};
%\node[font=\bfseries] (3) at (12.7,10.2) {$x_4$};

 \filldraw[fill = lightgray]
  (I) -- (D) -- (F);
 \filldraw[fill = lightgray]
  (C) -- (A) -- (D);

 \fill[black]  (A) circle [radius=3pt]; 
\fill[black]  (C) circle [radius=3pt]; 
\fill[black]  (D) circle [radius=3pt]; 	
%\fill[black]  (E) circle [radius=3pt]; 
\fill[black]  (F) circle [radius=3pt]; 
\fill[black]  (G) circle [radius=3pt]; 
%\fill[black]  (L) circle [radius=3pt]; 
%\fill[black]  (I) circle [radius=3pt]; 
%\fill[black]  (H) circle [radius=3pt]; 		
%\fill[black]  (J) circle [radius=3pt]; 	
%\fill[black]  (B) circle [radius=3pt];   
  
%\draw[-,>=latex, thick] (J) to (L);
%\draw[-,>=latex, thick] (J) to (E);
\draw[-,>=latex, thick] (A) to (C);
%\draw[-,>=latex, thick] (F) to (E);
%\draw[-,>=latex, thick] (D) to (E);
%\draw[-,>=latex, thick] (G) to (E);
\draw[-,>=latex, thick] (G) to (F);
\draw[-,>=latex, thick] (I) to (D);
\draw[-,>=latex, thick] (D) to (F);
%\draw[-,>=latex, thick] (I) to (E);
%\draw[-,>=latex, thick] (I) to (H);
%\draw[-,>=latex, thick] (H) to (E);
\draw[-,>=latex, thick] (A) to (D);
\draw[-,>=latex, thick] (C) to (D);

\end{tikzpicture}

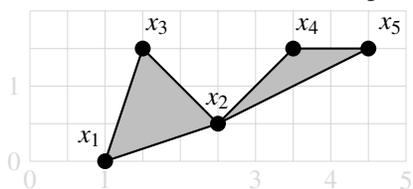
\captionof{figure}{\small{Simplicial complex }}
%\end{figure}
\end{center}
Though interesting questions in research are related to such embeddings,
there are not discussed in this manuscript except implicitly when illustrating complexes in pictures.
\end{exa}

\begin{rem} \label{Rem: constructing simplicial complexes}
For this work, it is important to note that  simplicial complexes are very good models for  describing networks. 
More precisely, objects in a network correspond to vertices. Such vertices are connected by an edge if the related objects fulfill a given relationship (which is based on the context of the network). A $k$-simplex with $k>1$ in the simplicial complex specifies whether a set of at least $k+1$ vertices fulfills some kind of pairwise relationship of interest.
\end{rem}

There are several ways to build a simplicial complex on a set of vertices. 
A key construction in various areas (see, \eg (\cite{Edelsbrunner, Gilbert, Hug, Penrose})
is given as follows:

\begin{Def} \label{Def: Vietoris-Rips}
Let $V= \{v_1, \ldots , \,v_m \} \in\mathbb{R}^n $, $r \in \mathbb{R}_{> 0}$, and a given metric $d$ on $\mathbb{R}^n$.  The \textit{Vietoris-Rips complex} $\mathcal{R}(V, \,r)$ is the simplicial complex, whose $k$-simplices $\sigma^{(k)}$ are the sets $\{v_{i_1},\ldots ,\,v_{i_{k+1}}\}\subseteq V$ with 
\[  
d(v_{i_l},v_{i_j}) \leq r \,\text{ for all }\, j, \, l \in \{1, \ldots, k+1\}. 
\]
\end{Def}

The Vietoris-Rips complex is the so-called \emph{clique complex} of the underlying graph $G$
consisting of its 0- and 1-dimensional faces. This means, that whenever there exists a $k+1$-clique in $G$, the corresponding $k$-simplex belongs to the simplicial complex.  
Note that there are many other ways to create simplicial complexes, like \textit{C\v{e}ch-complexes}. If complexes are explicitly constructed, then here usually the  Vietoris-Rips construction is used in this paper.

To construct a Vietoris-Rips complex it is required to define the distance between two data points $x_1, x_2 \in \mathbb{R}^n$ via a metric $d$. There are several reasonable options for that. Below, there are some suggestions where the objects according to Remark~\ref{Rem: constructing simplicial complexes} are IP addresses and one is interested in their interaction in the net: 

\begin{itemize}
        \item Euclidean distance $d_1(x_1, x_2) = \parallel x_2-x_1\parallel_2 $
        \item Other spatial distances like $d_2(x_1, x_2) = \parallel x_2- x_1\parallel_{\infty}$ or $d_3(x_1, x_2)=\parallel x_2- x_1 \parallel_{1}$
        \item $d_4(x_1,x_2) = \left\{
\begin{array}{ll}
\frac{1}{e} & \text{ if there is a possible data exchange of size } e \text{ between } x_1, x_2  \\
2 & \, \text{ otherwise}\\
\end{array}
\right. $  
        \item $d_5(x_1,x_2) = \left\{
\begin{array}{ll}
0 & \text{ if there is a possible data exchange between } x_1 \text{ and } x_2  \\
1 & \, \text{ otherwise}\\
\end{array}
\right. $
\end{itemize}

\begin{exa}
    The simplicial complex from the Example~\ref{Exa: simplicial complex} is a Vietoris-Rips complex, \eg with $V= \{x_1,x_2,x_3,x_4,x_5\}$, $d=d_1$, and $r = 3$. 
\end{exa}

If one is interested in relationships and in activities that are taking place at a special location, \eg in a company, it might be helpful to consider spatial distances like the Euclidean one.
There are situations where it might be more reasonable to use distances which are more closely related to possible data exchange like $d_4$ and $d_5$.

Recall that the constructions of graphs in~\cite{Atzmüller} are  based on placing an (undirected) edge between two vertices $v$ and $w$ in a finite vertex set $V$ corresponding to IP-addresses, when they interact (in one or two directions). 
These are exactly ``Vietoris-Rips-graphs'' (which are also called \emph{Gilbert graphs} or \emph{random geometric graphs}~\cite{Gilbert}) using the metric $d_5$.

\begin{exa}
Let us reconsider the simplicial complex of Example~\ref{Exa: simplicial complex}. Some exemplary distances are given as follows:
\[d_1(x_1, x_2) = \frac{\sqrt{10}}{4}, \, d_2 ( x_1, x_2) = \frac{3}{2},\, d_3 ( x_1, x_2) = 2, \, d_4( x_1, x_2 ) \text{ depends on } e, \,  d_5( x_1, x_2 ) = 0. \]
\end{exa}

Note, that one can also distinguish between incoming and outcoming data by adding directions to our constructions so far. In this work we always consider undirected situations.

\section{Adjacencies and degrees for simplicial complexes} \label{Section: Adjacencies}

Notions like adjacencies and degrees in graphs are well known  (see, \eg~\cite{Diestel}). Generalizing this to simplicial complexes and based on the work in~\cite{Serrano}, the main goal of this section is to introduce various types of \textit{adjacencies} and \textit{degrees} in such complexes.  These notions are then used, in particular, to construct later so-called \textit{features} of vertices, which use simplicial centrality measures.  

From now on a simplicial complex is always denoted by $\Delta$ and $\sigma^{(q)}$ is a simplex in $\Delta$ of dimension $q$. 
First we give the following definitions from~\cite[Section 2.1]{Serrano}, which will be important in the upcoming parts:

\begin{Def} \label{Def: upper adjacency and degree}
    Let $\emptyset \neq \sigma^{(q)}, \sigma^{(q')} \in \Delta$ be different  simplices
    and $p\in \mathbb{N}$ with $1\leq p \leq \textrm{dim}\ \Delta$. Then:
\begin{enumerate}
 \item $\sigma^{(q)}$ and $ \sigma^{(q')}$ are called $p$\textit{-upper adjacent}, denoted by \[ \sigma^{(q)} \sim_{U_p} \sigma^{(q')} \] if there exists 
 \(\textrm{a $p$-simplex } \tau ^{(p)} \in \Delta, 
 \textrm{ having both } \sigma^{(q)} \textrm{ and } \sigma^{(q')} \textrm{ as faces.}\)
 %This is denoted by \[\sigma^{(q)} \sim_{U_p} \sigma^{(q')}. \]
    \item $\sigma^{(q)}$ and $ \sigma^{(q')}$ are called \textit{strictly $p$-upper adjacent}, denoted by $\sigma^{(q)} \sim_{U_p^*} \sigma^{(q')}\,,$ if \[\sigma^{(q)} \sim_{U_p} \sigma^{(q')} \text{ and } \sigma^{(q)} \nsim_{U_{p+1}} \sigma^{(q')} . \]
    Here $\sim_{U_{\dim \Delta+1}}$ is trivially defined.
    \item
    The $p$\textit{-upper degree} of $\sigma^{(q)}$ is
    \[ \deg_U^p( \sigma^{(q)}) =  |\{ \sigma^{(q'')}\in \Delta:  \sigma^{(q)} \sim_{U_p}  \sigma^{(q'')}\}|.\]
    \item
    The $(h,p)$\textit{-upper degree} of $\sigma^{(q)}$ is
    \[ \deg_U^{(h,p)}( \sigma^{(q)}) =  |\{ \sigma^{(q+h)}\in \Delta:  \sigma^{(q+h)} \sim_{U_{p}}  \sigma^{(q)}\}|.\]
        \item
    The \textit{strict} $(h,p)$\textit{-upper degree} of $\sigma^{(q)}$ is
    \[ \deg_U^{(h,p)^*}( \sigma^{(q)}) =  |\{ \sigma^{(q+h)}\in \Delta:  \sigma^{(q+h)} \sim_{U_{p^*}}  \sigma^{(q)}\}|.\]
    \item The \emph{maximal simplicial upper degree} of $\sigma^{(q)}$ is
    \[   \deg_U^{* } ( \sigma^{(q)}) = \sum_{h=1}^{\dim \Delta -q}  \deg_U^{(h,(q+h)) ^* } ( \sigma^{(q)}).\]
    \end{enumerate}
\end{Def}

\begin{exa}\

\begin{enumerate}
    \item     Two simplices, which are faces of a tetrahedron are $3$-upper-adjacent.
    \item Two vertices which share a common edge are $1$-upper adjacent; they are even strict $1$-upper adjacent, if they are not lying in a common triangle. 
\end{enumerate}

\end{exa}
There are also other variations of the concept of adjacency and degree, which are also introduced in~\cite[Section 2.1]{Serrano}:

\begin{Def} \label{Def: First part adjacency and degree}
Let $\emptyset \neq \sigma^{(q)}\,,\, \sigma^{(q')} \in \Delta$ denote  different simplices
    and $p\in \mathbb{N}$ with $0\leq p \leq \textrm{dim}\ \Delta-1$. Then:
\begin{enumerate}
    \item $\sigma^{(q)}$ and $ \sigma^{(q')}$ are called $p$-\textit{lower adjacent}, if there exists
    \[
    \textrm{a $p$-simplex } \tau ^{(p)} \in \Delta, \textrm{ which is a common face of both.}
    \] This is denoted by 
    \[\sigma^{(q)} \sim_{L_p} \sigma^{(q')}. \]
     \item  $\sigma^{(q)}$ and $ \sigma^{(q')}$ are called \textit{strictly} $p$\textit{-lower adjacent}, if 
    \[\sigma^{(q)} \sim_{L_p} \sigma^{(q')} \text{ and } \sigma^{(q)} \not\sim_{L_{p+1}} \sigma^{(q')}. \]
    This is denoted by 
    \[ \sigma^{(q)} \sim_{L_{p*}} \sigma^{(q')}.\]
   Here $\sim_{L_{\dim \Delta+1}}$ is trivially defined.
    \item
    The $p$\textit{-lower degree} of $\sigma^{(q)}$ is
    \[ \deg_L^p( \sigma^{(q)}) =  |\{ \sigma^{(q'')}\in \Delta:  \sigma^{(q)} \sim_{L_p}  \sigma^{(q'')}\}|.\]
   \item $\sigma^{(q)}$ and $  \sigma^{(q')}$ are called $p$\textit{-adjacent}, if they are strictly $p$-lower adjacent and not $p'$-upper adjacent for $p' = q+ q' - p$. This is denoted by 
    \[\sigma^{(q)} \sim_{A_p} \sigma^{(q')}. \]
     \item The $p$\textit{-adjaceny degree} of $\sigma^{(q)}$ is 
    \[ \deg_A^p( \sigma^{(q)}) =  |\{ \sigma^{(q'')}\in \Delta:  \sigma^{(q)} \sim_{A_p}  \sigma^{(q'')}\}|.\]
     \item $\sigma^{(q)}$ and $\sigma^{(q')}$ are called \textit{maximal} $p$\textit{-adjacent}, denoted by
        \[ \sigma^{(q)} \sim_{A_p^*}  \sigma^{(q')}\]
        if and only if
         \[ \sigma^{(q)} \sim_{A_p}  \sigma^{(q')} \text{ and }  \sigma^{(q')} \ \nsubseteq  \sigma^{(q'')} \text{ whenever } \sigma^{(q)} \sim_{A_p}  \sigma^{(q'')}.\]
    \item The \textit{maximal} $p$\textit{-adjaceny degree} of $\sigma^{(q)}$ is 
    \[ \deg_A^{p^* } ( \sigma^{(q)}) =  |\{ \sigma^{(q'')}\in \Delta:  \sigma^{(q)} \sim_{A_p^*}  \sigma^{(q'')}\}|\]

        \item The \textit{maximal} \textit{simplicial degree} of $\sigma^{(q)}$ is 
   \[ \deg^{* } ( \sigma^{(q)}) =  \deg_A^{* } ( \sigma^{(q)}) + \deg_U^{* } ( \sigma^{(q)})  ,\]
    where \[  \deg_A^{* } ( \sigma^{(q)}) = \sum_{p=0}^{q-1}  \deg_A^{p^* } ( \sigma^{(q)}).\]
\end{enumerate}
\end{Def}
Some examples for the definitions introduced so far are:

\begin{exa}
     Consider the simplicial complex from Example~\ref{Exa: simplicial complex}. The simplices $\{x_1, x_2, x_3 \}$ and $\{x_2, x_4, x_5\}$ of this complex have the following properties:
     \begin{enumerate}
         \item $x_2$ is a vertex of both simplices. Hence, they are $0$-lower-adjacent. They are strictly $0$-lower adjacent as well, as they share no simplex of higher dimension.
         \item Both simplices are not included in a common simplex of higher dimension, so they are not $p$-upper-adjacent for any $p$.
         \item  They are $0$-adjacent, since they are strictly $0$-lower adjacent and they are not $4$-upper adjacent.
       %\item They are also maximal-0-adjacent (in both directions).
         \item The $0$-adjacency degree of $\{x_1, x_2, x_3 \}$ is $3$, since besides $\{x_2, x_4, x_5\}$, also $\{x_2, x_4\}$ and $\{x_2, x_5\}$ are $0$-adjacent to $\{x_1, x_2, x_3 \}$.
         
     \end{enumerate}
 \end{exa}

In the upcoming sections, we will focus on those situations, where one simplex is always a vertex, although Definition~\ref{Def: First part adjacency and degree} is of course also usable for simplices of higher dimensions.

We observe the following in this special case:
For graphs, two vertices are called adjacent, when they both share a common edge. This graph-adjacency is covered by the definitions of adjacencies in simplicial complexes above as well. More precisely, we have:

\begin{lem} \label{Lem: graph adjancency}
Let $G=(V,E)$ be a graph and $v\in V$ be a vertex. Then: 
\begin{enumerate}
    \item  Let $w\in V$ be another vertex in $G$. Then $w$ and $v$ are graph-adjacent if and only if they are $1$-upper adjacent. 
    \item If $v\neq w\in V $, then $v, w$ are not $p$-lower adjacent and they are not $p$-adjacent for any $0\leq p \leq \dim \Delta-1$.
\end{enumerate}
 
\end{lem}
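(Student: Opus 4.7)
My plan is to view $G$ as a 1-dimensional simplicial complex $\Delta$ whose faces are $\emptyset$, the singletons $\{v\}$ for $v \in V$, and the edges $\{v,w\} \in E$. With this identification both parts become a matter of unwinding Definitions~\ref{Def: upper adjacency and degree} and~\ref{Def: First part adjacency and degree}, together with the observation that the only faces of a $0$-simplex $\{v\}$ are $\{v\}$ itself and $\emptyset$.

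For part (i), I would first note that graph-adjacency of $v$ and $w$ means precisely that $\{v,w\} \in E$, i.e., that $\{v,w\}$ is a $1$-simplex of $\Delta$. To compare this with $1$-upper adjacency I would use that any $1$-simplex $\tau^{(1)} \in \Delta$ has exactly two vertices; hence if $\tau^{(1)}$ contains both $\{v\}$ and $\{w\}$ as faces then necessarily $\tau^{(1)} = \{v,w\}$. Conversely, if $\{v,w\} \in \Delta$ then this very $1$-simplex witnesses the $1$-upper adjacency of $v$ and $w$, which gives the desired equivalence.

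For part (ii), I would fix $p \in \{0, \ldots, \dim \Delta - 1\}$ and assume that $\tau^{(p)} \in \Delta$ is a common face of the $0$-simplices $\{v\}$ and $\{w\}$. Since $v \neq w$, any such $\tau^{(p)}$ must satisfy $\tau^{(p)} \subseteq \{v\} \cap \{w\} = \emptyset$, so $\tau^{(p)} = \emptyset$. But the empty set is by convention the unique $(-1)$-simplex, and therefore not a $p$-simplex for any $p \geq 0$; hence no such $\tau^{(p)}$ exists, and $v$ and $w$ are not $p$-lower adjacent. Since $p$-adjacency in Definition~\ref{Def: First part adjacency and degree}(iv) requires (strict) $p$-lower adjacency, the same vertices are also not $p$-adjacent.

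Overall the argument is essentially bookkeeping with the definitions, and I do not expect any genuine obstacle. The one subtlety to handle carefully is the dimension convention for the empty face: one must remember that $\emptyset$ is a $(-1)$-simplex rather than a $0$-simplex, since otherwise one could mistakenly allow $\emptyset$ as a common $0$-face of two distinct vertices and the second statement of the lemma would fail.
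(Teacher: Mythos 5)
Your proposal is correct and follows essentially the same route as the paper's proof: part (i) is the observation that a $1$-simplex containing both vertices as faces is exactly an edge between them, and part (ii) rests on the fact that two distinct vertices share no nonempty face, so they cannot be $p$-lower adjacent and hence not $p$-adjacent. Your explicit handling of the convention that $\emptyset$ is a $(-1)$-simplex is a slightly more careful rendering of what the paper compresses into the phrase ``no non-trivial face in common.''
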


\begin{proof}
   (i) If $v$ and $w$ are graph-adjacent, then it follows immediately from the definitions, that they are $1$-upper adjacent. On the other hand, if $v$ and $w$ are $1$-upper adjacent, then they are both a face of a $1$-simplex (\ie an edge) of $G$. 
   
   (ii) The vertices $v$ and $w$ have no non-trivial face in common and are not $p$-lower adjacent for any possible $p$. This implies, that they are also not $p$-adjacent for such a $p$.
\end{proof}

%Some helpful properties of degrees are:

\begin{lem} \label{Lem: properties vertex}
    Let $v \in \Delta$ be a vertex. Then 
    \begin{enumerate}
        \item $ \deg_L^p(v) = \deg_A^p(v) = 0$ for all $p > 0$. 
        \item $\deg_L^p(v) = 0 =\deg_A^p(v)$ for $p=0$.
        \item $\deg^{* } (v) =   \deg_U^{*}(v).$
    \end{enumerate}

\end{lem}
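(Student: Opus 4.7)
The plan is to read off all three statements from the fact that a vertex $v$ is by definition a $0$-simplex, so the only non-empty subset of $v$ is $v$ itself. Part (i) will follow from a cardinality argument about $p$-simplex faces of $v$; part (ii) will use the same principle in the borderline case $p=0$; and part (iii) will follow by combining (i) and (ii) with the empty-sum observation $\sum_{p=0}^{-1}\ldots=0$.

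For (i), fix $p\geq 1$. Any $p$-simplex has $p+1\geq 2$ vertices, so it cannot sit inside the singleton $v$ as a subset. Thus no $p$-simplex is a face of $v$, and by Definition~\ref{Def: First part adjacency and degree}(i) this rules out $v\sim_{L_p}\sigma^{(q'')}$ for every $\sigma^{(q'')}\in\Delta$, forcing $\deg_L^p(v)=0$. Since $p$-adjacency requires in particular $p$-lower adjacency (via the strict-$p$-lower-adjacency clause), we also obtain $\deg_A^p(v)=0$.

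For (ii), take $p=0$. The only $0$-simplex available as a face of $v$ is $v$ itself, so a common $0$-face of $v$ and a distinct simplex $\sigma^{(q'')}\in\Delta$ would have to equal $v$. I would then split into two cases: if $\sigma^{(q'')}$ is another vertex $w\neq v$, the only non-empty face of $w$ is $w$, so $v$ is not among the faces of $\sigma^{(q'')}$; and if $\dim\sigma^{(q'')}\geq 1$, the lower-adjacency condition as intended in Definition~\ref{Def: First part adjacency and degree}(i) excludes taking one of the two simplices themselves as the common face (this is the one subtle point and is what I expect to be the main obstacle, since it requires fixing the convention for ``common face'' inherited from~\cite{Serrano}). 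Either way $\deg_L^0(v)=0$, and then $\deg_A^0(v)=0$ follows exactly as in (i).

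For (iii), observe that $v$ has simplex dimension $q=0$. Plugging this into the definition of the maximal adjacency degree gives
\[
\deg_A^*(v) \;=\; \sum_{p=0}^{q-1}\deg_A^{p^*}(v) \;=\; \sum_{p=0}^{-1}\deg_A^{p^*}(v) \;=\; 0,
\]
because the summation range is empty. (Alternatively, $\deg_A^{p^*}(v)\leq\deg_A^p(v)=0$ by parts (i) and (ii) yields the same conclusion term by term.) Substituting into $\deg^*(v)=\deg_A^*(v)+\deg_U^*(v)$ from Definition~\ref{Def: First part adjacency and degree}(viii) then produces $\deg^*(v)=\deg_U^*(v)$, as required.
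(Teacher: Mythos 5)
Your proposal is correct and follows essentially the same route as the paper: all three parts are read off directly from the definitions, using the fact that a vertex has no non-trivial proper faces. If anything, your treatment is more careful than the paper's — for part (ii) the paper only remarks that $v$ is not $0$-lower adjacent to itself, whereas you correctly isolate the real issue, namely that $\deg_L^0(v)=0$ requires the ``common face'' in Definition~\ref{Def: First part adjacency and degree} to be read as a \emph{proper} face of $v$ (otherwise any edge containing $v$ would be $0$-lower adjacent to $v$ via the common face $\{v\}$), and your empty-sum computation for (iii) is a clean alternative to the paper's term-by-term observation.
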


\begin{proof}
(i) Since a vertex has no other non-trivial simplices as subsets, the conclusion follows.

(ii) By definition $v$ is not $0$-lower adjacent to itself.  Hence, $\deg_L^p(v) =0=\deg_A^p(v)=0$.

(iii) Since $\deg_A^p(v) = 0 $, we have $\deg_A^{p^* } ( \sigma^{(q)})=0$. 

This concludes the proof.
\end{proof}

Below we are mainly interested in simplices in which a given vertex is included. 
In this context, only $p$-upper adjacencies and related degrees are of interest. 
The upper-adjacency is also reasonable to use, as it is a generalization of graph-adjacencies due to Lemma~\ref{Lem: graph adjancency}. Hence, in the following we only use $p$-upper adjacencies.

\section{Simplicial centrality measures}\label{Section: Simplicial centrality measures}
In this section, we introduce centrality measures for simplicial complexes
which are motivated by the same concept related to graphs. In the following, we focus on the so-called degree and closeness centralities. These generalizations of well-known graph degree and closeness centralities (see, \eg~\cite{Borgatti, Newman}) have all been proposed in~\cite{Serrano}. With these centrality measures we create  features for vertices of a complex as announced above in the introduction in Section~\ref{Section: Introduction}. 
%The authors in~\cite{Atzmüller} equipped their patterns already with graph-based centralities. 
To be more formal, a \textit{centrality measure} is a function 
\[c\colon I \to \R,\] where $I$ is the set, which is usually called the \emph{set of individuals}. If $I$ is a simplicial complex, then a more important simplex $\sigma \in I$, \ie one with a significantly high \emph{centrality} $c(\sigma)$, contains way more valuable information regarding the considered network than simplices with smaller centralities. There are many possibilities to define such a centrality measure. 
Hereby, the notation $c_{\sigma}$ is an abbreviation for the function value $c(\sigma)$. 

Motivated by definitions and suggestions in~\cite{Serrano} we introduce the following centrality measures for simplicial complexes. Observe, that in that work the authors propose centrality measures with images in $[0,1]$. But to be consistent with the considered centrality measures in~\cite{Atzmüller}, such normalizations are neglected in this manuscript. Then, centrality measures in~\cite{Atzmüller} are special cases of the simplicial versions given below. For simplicity, the following concepts are only defined for vertices of simplicial complexes, since in this paper in considered applications individuals are always vertices. As already mentioned in the end of Section~\ref{Section: Adjacencies}, 
only upper degrees are considered.
The first centrality measure of importance is:

\begin{Def} Let $\sigma \in \Delta $ be a vertex and $p\in \mathbb{N}_{> 0}$. Then 
\[ c_\sigma^{D^p} = \deg^p _U (\sigma )
%\frac{\deg^p(i )}{\binom{q+1}{p+1}(\sum_{q' = p+1}^{\dim K }\binom{f_0 -(p+1) }{q '}-1)}
\] is the $p$-\textit{degree centrality} of $\sigma$ and \[ c_{\sigma}^{D^*} = \deg^* (\sigma )\] is the \textit{maximal simplicial degree centrality} of $\sigma$.
\end{Def}
See~\cite[Def. 12, 13]{Serrano} for a related definition. An important property of this centrality measure is:

\begin{thm} \label{Thm: bound degree centrality}
The $p$-degree centrality of a vertex $\sigma \in \Delta$ fulfills
\begin{align} \label{Eq: bound degree centrality}
    c_\sigma^{D^p} 
\leq 
\sum_{j=1}^{p+1} 
\binom{{\rm deg}_U^{(0,1)} (\sigma)+1}{j}-1.
\end{align}

\end{thm}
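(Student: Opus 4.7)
The plan is to identify every simplex that is $p$-upper-adjacent to $\sigma$ with a non-empty subset of the closed neighborhood of $\sigma$ in the $1$-skeleton, and then to count these subsets.

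First, I would set $d := \deg_U^{(0,1)}(\sigma)$ and let $N(\sigma)$ denote the set of vertices $w \in \Delta$ with $w \neq \sigma$ and $\{\sigma, w\} \in \Delta$. By the definition of the $(0,1)$-upper degree we have $|N(\sigma)| = d$, hence the \emph{closed neighborhood} $\{\sigma\} \cup N(\sigma)$ has cardinality $d+1$.

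Next, suppose $\sigma^{(q'')} \in \Delta$ satisfies $\sigma \sim_{U_p} \sigma^{(q'')}$. Then by Definition~\ref{Def: upper adjacency and degree}(i) there exists a $p$-simplex $\tau^{(p)} \in \Delta$ having both $\sigma$ and $\sigma^{(q'')}$ as faces. Since $\tau^{(p)}$ has exactly $p+1$ vertices, one of which is $\sigma$, and since $\Delta$ is closed under taking subsets, every edge $\{\sigma, w\}$ with $w \in \tau^{(p)} \setminus \{\sigma\}$ lies in $\Delta$; thus $\tau^{(p)} \setminus \{\sigma\} \subseteq N(\sigma)$. Consequently the vertex set of $\sigma^{(q'')}$ is a non-empty subset of $\{\sigma\} \cup N(\sigma)$ of cardinality at most $p+1$, and by the definition of $p$-upper-adjacency it is distinct from $\sigma$.

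To conclude, I would simply count: the number of non-empty subsets of a $(d+1)$-element set having cardinality at most $p+1$ is
\[
\sum_{j=1}^{p+1} \binom{d+1}{j},
\]
and we must subtract $1$ to exclude the singleton $\{\sigma\}$ itself. This yields the claimed inequality
\[
c_\sigma^{D^p} \;\leq\; \sum_{j=1}^{p+1}\binom{\deg_U^{(0,1)}(\sigma)+1}{j} - 1.
\]
The argument has no real technical obstacle; the only subtlety is to observe that the bound is typically \emph{strict}, since not every such subset need to actually belong to $\Delta$ and, even if it does, not every pair of such faces need to sit inside a common $p$-simplex. This is why the statement is an upper bound rather than an equality.
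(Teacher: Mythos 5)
Your proof is correct and takes essentially the same route as the paper: both arguments observe that any simplex $p$-upper adjacent to $\sigma$ sits inside a $p$-simplex containing $\sigma$, hence its vertex set is a non-empty subset of the closed neighborhood $\{\sigma\}\cup N(\sigma)$ of size at most $p+1$ and distinct from $\{\sigma\}$, and then both count such subsets. If anything, your write-up is the cleaner of the two, since you make the injection into subsets of the closed neighborhood explicit rather than treating the $0$-dimensional faces as a separate case at the end.
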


\begin{proof} 
Observe that $(j-1)$-dimensional faces in $\Delta$ with $2 \leq j \leq p+1$ are  $p$-upper adjacent to $\sigma$ if they lie in a common $p$-dimensional simplex with $\sigma$. 
Given such a face all its vertices not equal to $\sigma$ have the property that they induce an edge in $\Delta$ together with $\sigma$, \ie they are $(0,1)$-adjacent to $\sigma$. Note that $\sigma$ might be a vertex of such a face or not. Hence, 
it suffices to count all possible $(j-1)$-faces induced by vertices which are either $\sigma$ or
$(0,1)$-adjacent to $\sigma$ accepting an over-count of such situations. Finally, there are at most $\deg_U^{(0,1)} (\sigma)$ many $0$-dimensional faces in $\Delta$ which are $p$-upper adjacent to $\sigma$. This concludes the proof.
\end{proof}

Using Theorem~\ref{Thm: bound degree centrality}, there is some significant computational effort needed to provide an upper bound for the $p$-degree centrality of $\sigma$. Next, we define a further degree centrality which allows alternative upper bounds:

\begin{Def} 
Let $\sigma \in \Delta $ be a vertex and $p \in \N_{> 0}$. Then 
\[ c_{\sigma}^{D^{(p,p)}} = \deg^{(p,p)} _U (\sigma )\] is the $(p,p)$-\textit{degree centrality} of $\sigma$ and \[ c_{\sigma}^{D^{(p,p)^*}} = \deg^{(p,p)^*} _U (\sigma )\] is the $(p,p)$-\textit{strict degree centrality} of $\sigma$. 

\end{Def}
See~\cite[Def. 10, 11]{Serrano} for a related definition. Note that the $(p,p)$-degree centrality of $\sigma$ is the number of $p$-dimensional faces of $\Delta$ that contain $\sigma$ and the $(p,p)$-strict degree centrality is the corresponding strict version. Since $\sigma$ is always a simplex of dimension $0$, we have
\begin{align} \label{Eq: max simpl degree and ppdegree}
    c_{\sigma}^{D^*} = \sum_{p=1}^{\dim \Delta} \deg^{(p,p)^*} _U (\sigma ).
\end{align}

\begin{rem}
A new upper bound for $c_\sigma^{D^p}$ is
\begin{align} \label{Eq: second bound degree centrality}
    c_\sigma^{D^p} 
\leq 
\deg_U^{(p,p)} (\sigma) \cdot (2^{p+1}-2)
=c_{\sigma}^{D^{(p,p)}} \cdot (2^{p+1}-2).
\end{align}
This bound follows from the fact that every non-trivial face except $\sigma$ of a $p$-simplex containing $\sigma$  is $p$-upper adjacent to $\sigma$.

The upper bound in (\ref{Eq: second bound degree centrality}) is often better than the bound in Theorem~\ref{Thm: bound degree centrality}. For example, consider a simplicial complex $\Delta$ which has exactly one tetraeder that contains $\sigma$ and $\sigma$ has additionally $t$ many $1$-upper adjacent vertices which are not pairwise connected with each other by an edge. Then, for $p=3$ the bound in (\ref{Eq: second bound degree centrality}) gives $14$ independent of $t$ where the other one is a growing function in $t$.

The disadvantage of the bound in (\ref{Eq: second bound degree centrality}) in comparison to the bound in (\ref{Eq: bound degree centrality}) is the need to find all $p$-simplices in $\Delta$ which contain $\sigma$, which is usually of more computational effort especially for $p \gg 0$ compared to determine all $(0,1)$-adjacent vertices of $\sigma$.
\end{rem}
\begin{lem} \label{Lem: degree centrality}
The $(p,p)$-degree centrality of a vertex $\sigma \in \Delta$ is anti-monotonic, i.e.
\[c_\sigma^{D^{(p,p)}}\leq c_\sigma^{D^{(q,q)}}\text{ for }1 \leq  q \leq p.\]
\end{lem}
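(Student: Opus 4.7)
The plan is to reduce the inequality to a combinatorial cardinality estimate, using the reformulation of the $(p,p)$-degree centrality that appears just before the lemma. Since $c_\sigma^{D^{(r,r)}}$ equals the number of $r$-dimensional faces of $\Delta$ containing $\sigma$, introducing
$A_r := \{\tau \in \Delta : \dim \tau = r,\ \sigma \in \tau\}$
reduces the claim to showing $|A_p| \leq |A_q|$ for every $1 \leq q \leq p$.

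The central input is closure of $\Delta$ under taking subsets. If $\tau \in A_p$ has vertex set $\{\sigma, v_1,\ldots, v_p\}$, then every subset of $\tau$ of cardinality $q+1$ containing $\sigma$ is again a face of $\Delta$ and therefore lies in $A_q$; since such a subset is obtained by choosing $q$ of the remaining $p$ vertices, each $\tau \in A_p$ determines exactly $\binom{p}{q} \geq 1$ elements of $A_q$. This gives a natural many-valued correspondence from $A_p$ into $A_q$, which is where the relationship between the two centralities is encoded.

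The main step, and what I expect to be the principal obstacle, is promoting this correspondence to a genuine injection $\Phi\colon A_p \hookrightarrow A_q$. A natural attempt is induction on $p-q$, reducing to the base case $q = p-1$: fix a total order on the vertex set of $\Delta$ and let $\Phi(\tau)$ delete a single distinguished non-$\sigma$ vertex of $\tau$, chosen according to the fixed order. The technical difficulty concentrates in checking injectivity, because two distinct $p$-simplices sharing a common $(p-1)$-subface through $\sigma$ can a priori collide under such a rule; ruling this out requires leveraging the way simplices of $\Delta$ extend upward, possibly by refining the tie-breaking rule so that the deleted vertex is recoverable from the image together with some structural data of $\Delta$. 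Once injectivity is obtained in the base case, composing the maps finishes the induction and gives the desired bound.
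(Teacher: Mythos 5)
Your reduction to the cardinality estimate $|A_p|\leq|A_q|$ is the right reformulation, and you have put your finger on exactly the critical point: everything hinges on upgrading the $\binom{p}{q}$-to-one correspondence to an injection $\Phi\colon A_p\hookrightarrow A_q$. But that step is not a technical difficulty to be fixed by a cleverer tie-breaking rule --- no such injection exists in general, because the stated inequality is false. Take $\Delta$ to be the cone with apex $\sigma$ over the complete graph on four vertices $v_1,\dots,v_4$, with all triangles $\{\sigma,v_i,v_j\}$ included (this is even a Vietoris--Rips complex). Then $\sigma$ lies in $|A_1|=4$ edges but in $|A_2|=\binom{4}{2}=6$ triangles, so $c_\sigma^{D^{(2,2)}}=6>4=c_\sigma^{D^{(1,1)}}$. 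The underlying phenomenon is that $c_\sigma^{D^{(r,r)}}$ counts the $(r-1)$-faces of the link of $\sigma$, and $f$-vectors of simplicial complexes need not be non-increasing. The paper's own Table 1 already exhibits the failure: vertices 3, 7, 8, 11, 12, 13, 17 and 19 all have $c^{D^{(2,2)}}>c^{D^{(1,1)}}$. Concretely, in your base case $q=p-1$, deleting the order-maximal non-$\sigma$ vertex collides whenever two $p$-simplices through $\sigma$ share a $(p-1)$-face through $\sigma$ and differ only in their largest extra vertex, and the example above realizes such collisions unavoidably (six triangles cannot inject into four edges).

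For what it is worth, the paper's one-line proof has precisely the gap you isolated: it observes that every $p$-face containing $\sigma$ has some $q$-face containing $\sigma$ and silently treats this assignment as injective. Your write-up is more honest in flagging injectivity as the unresolved obstacle, but the obstacle cannot be overcome. A correct statement in this direction would bound $c_\sigma^{D^{(p,p)}}$ by $\binom{c_\sigma^{D^{(1,1)}}}{p}$ (choose the $p$ non-$\sigma$ vertices among the neighbours of $\sigma$), not by $c_\sigma^{D^{(q,q)}}$ itself; note that the subsequent bound $c_\sigma^{D^{*}}\leq c_\sigma^{D^{(1,1)}}\cdot\dim\Delta$, whose proof cites this lemma, inherits the same problem.
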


\begin{proof}
For every $p$-simplex $\tau$ that contains $\sigma$ there exists a lower-dimensional $q$-face $\tilde{\sigma}$ of $\tau$ for each $q = p-k$ with $0 \leq k \leq p-1$ that contains $\sigma$ as well. 
\end{proof}

An upper bound for the maximal simplicial degree centrality is:

\begin{thm}
Let $\sigma \in \Delta$ be a vertex. For the maximal degree centrality of $\sigma$ we have
    \begin{align} \label{Eq: computational efford degree centrality}
   c_\sigma^{D^{*}} \leq c_\sigma^{D^{(1,1)}} \cdot\, \dim \Delta .
\end{align}
\end{thm}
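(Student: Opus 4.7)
The plan is to combine the decomposition formula in equation (\ref{Eq: max simpl degree and ppdegree}) with the anti-monotonicity stated in Lemma~\ref{Lem: degree centrality}. Specifically, the identity
\[
c_{\sigma}^{D^*} = \sum_{p=1}^{\dim \Delta} \deg^{(p,p)^*}_U (\sigma)
\]
expresses the maximal simplicial degree centrality as a sum of $\dim \Delta$ many strict degrees, so the strategy reduces to bounding each summand uniformly by $c_\sigma^{D^{(1,1)}}$.

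First I would observe that the strict $(p,p)$-upper degree is always dominated by the non-strict one, since being strictly $p$-upper adjacent is a stricter condition than being $p$-upper adjacent. This gives $\deg^{(p,p)^*}_U(\sigma) \leq \deg^{(p,p)}_U(\sigma) = c_\sigma^{D^{(p,p)}}$ for every $p \geq 1$. Then I would invoke Lemma~\ref{Lem: degree centrality} with $q = 1$ to obtain $c_\sigma^{D^{(p,p)}} \leq c_\sigma^{D^{(1,1)}}$ for each $1 \leq p \leq \dim \Delta$.

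Chaining these estimates yields
\[
c_\sigma^{D^*} = \sum_{p=1}^{\dim \Delta} \deg^{(p,p)^*}_U(\sigma) \leq \sum_{p=1}^{\dim \Delta} c_\sigma^{D^{(p,p)}} \leq \sum_{p=1}^{\dim \Delta} c_\sigma^{D^{(1,1)}} = c_\sigma^{D^{(1,1)}} \cdot \dim \Delta,
\]
which is the desired inequality.

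There is no genuine obstacle here: the argument is just a telescoping of two monotonicity facts that have already been established, so the main point is to make sure the identity (\ref{Eq: max simpl degree and ppdegree}) and the anti-monotonicity of $c_\sigma^{D^{(p,p)}}$ are cited in the correct order. If anything is subtle, it is verifying that the strict version $\deg^{(p,p)^*}_U$ is indeed bounded above by the non-strict $\deg^{(p,p)}_U$, but this is immediate from comparing Definition~\ref{Def: upper adjacency and degree}(iv) and (v).
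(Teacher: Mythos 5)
Your argument is correct and is essentially the paper's own proof: both start from the decomposition $c_\sigma^{D^*} = \sum_{p=1}^{\dim\Delta}\deg_U^{(p,p)^*}(\sigma)$, bound the strict degrees by the non-strict ones, and then apply the anti-monotonicity of Lemma~\ref{Lem: degree centrality} with $q=1$. The only cosmetic difference is that the paper passes through $\dim\Delta\cdot\max_p \deg_U^{(p,p)}(\sigma)$ as an intermediate step, whereas you bound each summand directly; the two are equivalent.
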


\begin{proof}
The assertion follows from the following:
\begin{align*}
c_\sigma^{D^*}
&=  \sum_{p=1}^{\dim \Delta } \deg_U^{(p,p)^* } ( \sigma) 
\leq \sum_{p=1}^{\dim \Delta } \deg_U^{(p,p)} ( \sigma)  
\leq \dim \Delta \cdot \max_{p=1, \ldots, \dim \Delta} \deg_U^{(p,p) } ( \sigma) 
\\
&\leq \dim \Delta \cdot \deg_U^{(1,1) } ( \sigma) 
=  \dim {\Delta} \cdot c_\sigma^{D^{(1,1)}}.
\end{align*}
The first inequality follows directly from the definitions
and the second one is trivial to see.
The third inequality is a consequence of Lemma~\ref{Lem: degree centrality}
and the equation is just the definition of the $(p,p)$-degree centrality.
\end{proof}

Next, we discuss a typical situation where the maximal simplicial degree centrality $c_\sigma^{D^*}$ of a vertex $\sigma \in V$ is of interest. 
Consider the individual  $\sigma_2$, \ie the corresponding vertex, in the network induced from the figure shown in Example~\ref{Exa: degree centrality} below. It  has a higher maximal simplicial degree centrality $c_{\sigma_2}^{D^*} =6$ than the individual $\sigma_1$ with $c_{\sigma_1}^{D^*}= 1$ (see below). Given the context of a potential application, there might be a higher risk of $\sigma_2$ to be intruded, because the intrusion of $\sigma_2$ has a higher impact with respect to aspects measured by $c_\sigma^{D^*}$.  

Example~\ref{Exa: degree centrality} also illustrates why it might not be sufficient to consider only the $(p,p)$-degree centrality for the highest $p$ as a feature to evaluate the importance of a vertex. If we take the maximal simplicial degree centrality as well into account as a feature in the discussed situation, then we obtain an indicator, if there may be a lot of simplices which are not of maximal dimension, that include $v$ as a vertex.

\begin{exa} \label{Exa: degree centrality}
    In the following illustration, the vertices $\sigma_1, \sigma_2$ have $(p,p)$-degree centralities
    \[c_{\sigma_1}^{D^{(3,3)^*}}= 1, \, c_{\sigma_2}^{D^{(3,3)^*}}= 0 \,\text{ and }\,c_{\sigma_1}^{D^{(2,2)^*}}= 0,\, c_{\sigma_2}^{D^{(2,2)^*}}= 6.\] 
    Moreover,
    \[ c_{\sigma_1}^{D^*}= 1 < c_{\sigma_2}^{D^*} =6, \]
    although for the maximal $p$, the $(p,p)$-degree centrality of $\sigma_1$ is higher
    than the one of $\sigma_2$.
    Furthermore, the upper bounds from Equation (\ref{Eq: computational efford degree centrality}) are 
    \[ c_{\sigma_1}^{D^*}  \leq 3 \cdot c_{\sigma_1}^{D^{(1,1)}}= 9\text{ and } c_{\sigma_2}^{D^*}  \leq 2 \cdot c_{\sigma_1}^{D^{(1,1)}}=12.\]

\begin{center}

\begin{tikzpicture}

\draw[step=0.5, gray!30] (0,0) grid (12,3);

\coordinate (A) at (1,1); 
\coordinate (C) at (1.5,2.5); 
\coordinate (F) at (4.5,2.5); 
\coordinate (E) at (3.5,1.5); 

\coordinate (R) at (5,1); 
\coordinate (S) at (5.5,2.5); 
\coordinate (T) at (9.5,2.5); 
\coordinate (U) at (8.5,1.5); 

\coordinate (V) at (6,1); 
\coordinate (W) at (6.5,2.5); 
\coordinate (X) at (10.5,0.5); 
\coordinate (Y) at (7.5,0.1); 
\coordinate (K) at (6.5,0.2);

\node[font=\bfseries] (3) at (1,0.7) {$\sigma_1$};

\node[font=\bfseries] (3) at (4.8,0.7) {$\sigma_3$};
\node[font=\bfseries] (3) at (5.8,0.7) {$\sigma_2$};

 \fill[black]  (A) circle [radius=3pt]; 
\fill[black]  (C) circle [radius=3pt]; 
\fill[black]  (D) circle [radius=3pt]; 	
 
\fill[black]  (G) circle [radius=3pt]; 

\draw[-,>=latex, thick] (A) to (C);
\draw[-,>=latex, thick] (D) to (I);
\draw[-,>=latex, thick] (A) to (D);
\draw[-,>=latex, thick] (C) to (D);
\draw[-,>=latex, thick] (I) to (A);
\draw[-,>=latex, thick] (I) to (C);

 \fill[black]  (R) circle [radius=3pt]; 
\fill[black]  (S) circle [radius=3pt]; 
\fill[black]  (T) circle [radius=3pt]; 	
 
\fill[black]  (U) circle [radius=3pt]; 
 \fill[black]  (V) circle [radius=3pt]; 
\fill[black]  (W) circle [radius=3pt]; 
\fill[black]  (X) circle [radius=3pt]; 	
 
\fill[black]  (Y) circle [radius=3pt]; 
 
\fill[black]  (K) circle [radius=3pt];

\draw[-,>=latex, thick] (V) to (K);
\draw[-,>=latex, thick] (V) to (Y);
\draw[-,>=latex, thick] (Y) to (K);

\draw[-,>=latex, thick] (R) to (S);
\draw[-,>=latex, thick] (V) to (R);
\draw[-,>=latex, thick] (V) to (S);

\draw[-,>=latex, thick] (X) to (U);
\draw[-,>=latex, thick] (V) to (U);
\draw[-,>=latex, thick] (V) to (X);

\draw[-,>=latex, thick] (W) to (S);

\draw[-,>=latex, thick] (V) to (W);
\draw[-,>=latex, thick] (V) to (T);
\draw[-,>=latex, thick] (T) to (U);
\draw[-,>=latex, thick] (X) to (Y);

\end{tikzpicture}


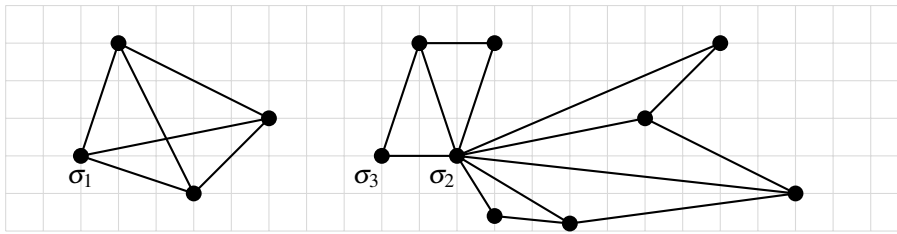
\captionof{figure}{\small{Two simplicial complexes }} \label{Abb: two simpl complexes}
\end{center}

\end{exa}

The authors of~\cite{Serrano} discuss  suggestions how to generalize \textit{closeness centralities} for vertices in graphs to new ones for simplices in simplicial complexes. Their proposals are based on the notation of $p$-adjacency. Since in this article we only use $p$-upper adjacency, we will work with the following modified versions of the definitions in~\cite[Section 3.3]{Serrano}:

\begin{Def}
Let $\sigma, \tilde{\sigma} \in \Delta$ be two vertices. $\sigma, \tilde{\sigma}$ are called $p$\textit{-connected}, if there is a path in the underlying graph of $\Delta$
starting in $\sigma$, ending in $\tilde{\sigma}$, and where two consecutive vertices of the path are $p$-upper adjacent in $\Delta$. 
\end{Def}

\begin{Def} 
Let $\sigma, \Tilde{\sigma} \in \Delta$ be two vertices and $p \in \N_{> 0}$.
\begin{enumerate}
  
    \item A $p$\textit{-walk} between $\sigma$ and $\Tilde{\sigma}$ of \emph{length} $r\in \mathbb{N}_{\geq 1}$
    is a sequence 
    \[\sigma_1 \tau_1^{(p_1)} \sigma_2 \tau_2^{(p_2)} \ldots \tau_{r-1}^{(p_{r-1})}\sigma_r \] 
    such that\ $\sigma=\sigma_1$, $\sigma_r=\Tilde{\sigma}$,   $\sigma_i$ and $\sigma_{i+1}$ are %(\textit{strict}) 
    $p_i$-upper adjacent with respect to $\tau_i^{(p_i)}$ for $i \in \{1, \ldots, r-1\} $ and $p = \max \{p_1, \ldots, p_r\}.$ 
\item The $p$\textit{-distance} $d_p(\sigma,\Tilde{\sigma})$ between $\sigma$ and $\Tilde{\sigma}$  is the  length of the shortest $p$-walk between $\sigma$ and $\Tilde{\sigma}$. If there are no $p$-walks, then the $p$-distance is $d_p(\sigma, \Tilde{\sigma})= \infty$.
    
\end{enumerate}
\end{Def}

The closeness centrality for vertices in graphs can now be generalized as follows:

\begin{Def} \label{Def: closeness centrality}
     Let $\sigma \in \Delta $ be a vertex and and $p \in \N_{> 0}$. The $p$\textit{-closeness centrality} of $\sigma$ is
\[ c_\sigma^{C^p} = \frac{1}{ \sum_{ \substack{\sigma \neq  \tilde{\sigma} \in \Delta\\ \text{ is a vertex}} } d_p (\sigma, \tilde{\sigma})},
\text{ where } \frac{1}{\infty} = 0  \text{ and} \]
\[c_{\sigma}^{C^{*}}= \sum_{1 \leq p \leq \dim \Delta} c_{\sigma}^{C^p}\] is the \textit{maximal closeness centrality } of $\sigma $.

\end{Def}

Similar to the degree centrality, an upper bound for the maximal closeness centrality is given as follows:

\begin{thm} \label{Thm: computational efford closeness centrality}
    \begin{align} \label{Eq: upper bound closeness centrality}
    c_{\sigma}^{C^{*}} \leq c_{\sigma}^{C^1} \cdot \dim \Delta. 
\end{align}
\end{thm}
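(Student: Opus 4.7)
The plan is to establish the pointwise inequality $c_\sigma^{C^p} \leq c_\sigma^{C^1}$ for each $1 \leq p \leq \dim\Delta$, and then sum over $p$ to obtain the claim. The key observation is that, since $\Delta$ is closed under taking subsets, whenever two vertices $\sigma_i, \sigma_{i+1}$ are $p_i$-upper adjacent via some simplex $\tau_i^{(p_i)} \in \Delta$, the edge $\{\sigma_i, \sigma_{i+1}\} \subseteq \tau_i^{(p_i)}$ is itself a $1$-simplex in $\Delta$, so $\sigma_i, \sigma_{i+1}$ are in particular $1$-upper adjacent.

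Using this, I would first show that any $p$-walk between vertices $\sigma$ and $\tilde\sigma$ of length $r$ induces a $1$-walk between them of the same length $r$: given a $p$-walk $\sigma_1\, \tau_1^{(p_1)}\, \sigma_2\, \tau_2^{(p_2)}\, \ldots\, \tau_{r-1}^{(p_{r-1})}\, \sigma_r$, replace each intermediate simplex $\tau_i^{(p_i)}$ by the $1$-simplex $\{\sigma_i,\sigma_{i+1}\} \in \Delta$. Each resulting step uses only $1$-upper adjacency, so the maximum over the new exponents equals $1$, which by definition makes the new sequence a $1$-walk of length $r$.

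From this it follows that $d_1(\sigma,\tilde\sigma) \leq d_p(\sigma,\tilde\sigma)$ for all vertices $\sigma,\tilde\sigma$ and all $1 \leq p \leq \dim\Delta$ (trivially if $d_p(\sigma,\tilde\sigma) = \infty$, and otherwise by taking a shortest $p$-walk and downgrading as above). Summing over all vertices $\tilde\sigma \neq \sigma$ gives
\begin{equation*}
\sum_{\tilde\sigma \neq \sigma} d_1(\sigma,\tilde\sigma)\ \leq\ \sum_{\tilde\sigma \neq \sigma} d_p(\sigma,\tilde\sigma),
\end{equation*}
and passing to reciprocals with the convention $1/\infty = 0$ yields $c_\sigma^{C^p} \leq c_\sigma^{C^1}$.

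Finally, summing this pointwise bound over $p = 1,\ldots,\dim\Delta$ gives
\begin{equation*}
c_\sigma^{C^*}\ =\ \sum_{p=1}^{\dim\Delta} c_\sigma^{C^p}\ \leq\ \sum_{p=1}^{\dim\Delta} c_\sigma^{C^1}\ =\ c_\sigma^{C^1} \cdot \dim\Delta,
\end{equation*}
which is the claim. There is no real obstacle here; the only subtle points are consistently handling the $\infty$ convention in the reciprocal, and making explicit the reduction from a $p$-upper adjacency between vertices to the corresponding $1$-upper adjacency given by the edge they span, which mirrors the comparison already used in the proof of the maximal simplicial degree bound.
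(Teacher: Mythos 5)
Your proposal is correct and follows essentially the same route as the paper's own proof: both arguments downgrade each $p$-walk to a $1$-walk of equal length (using that any two $p$-upper adjacent vertices span an edge of $\Delta$), deduce $d_1(\sigma,\tilde\sigma)\leq d_p(\sigma,\tilde\sigma)$, hence $c_\sigma^{C^p}\leq c_\sigma^{C^1}$, and sum over $p$. Your version is slightly more careful about the $1/\infty$ convention and about verifying that the downgraded sequence really is a $1$-walk in the sense of the definition, but the underlying idea is identical.
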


\begin{proof}
The assertion of the Theorem follows from Definition~\ref{Def: closeness centrality} and the claim that
\[
c_{\sigma}^{C^1}  \geq c_{\sigma}^{C^p} \text{ for all } 1 \leq p \leq \dim \Delta,
\]  
if there exist a $p$-walks from $\sigma$. 

This is demonstrated as follows.
In case that two vertices $\sigma$ and $\tilde{\sigma}$ are connected via a $p$-(or lower)-dimensional simplex, then they are also contained in a $1$-face of that  simplex, \ie in an edge. Therefore, if there exists a $p$-walk between $\sigma$ and $\tilde{\sigma}$, then there exists also a $1$-walk, which has the same length as the $p$-walk, but there might also exist a $1$-walk with a smaller length. It follows for all $p$ that
  \[d_p(\sigma, \tilde{\sigma}) \geq d_1(\sigma, \tilde{\sigma}).\]
 
\end{proof}

Note that this upper bound for $c_{\sigma}^{C^{*}}$ is similar to the bound $c_\sigma^{D^{*}}$ in Equation (\ref{Eq: computational efford degree centrality}). 

\begin{exa}
Recall that the illustration in Example~\ref{Exa: degree centrality} induces two connected simplicial complexes. Then, the closeness centralities in the right simplicial complex $\Delta$ are computed for the vertex $\sigma_3$ as follows:
\[
c_3^{C^1} =  2\frac{1}{1}+6\frac{1}{2} =c_3^{C^2}.
\]
    If we add a new vertex by a new edge to $\sigma_3$ and, thus, construct a new simplicial complex $\Delta_1$, then we have in $\Delta_1$:
\[
c_3^{C^1} = 3\frac{1}{1}+ 6\frac{1}{2}.
\] 
But $c_3^{C^2}$ still equals to its old value. Hence, we see that 
\[
c_3^{C^1} \geq c_3^{C^2} \text{ in } \Delta_1.
\]
Since there is no tetraeder in $\Delta_1$, the maximal closeness centrality measure is given by 
\[c_3^{C^*}=c_3^{C^2} + c_3^{C^1}  < 2c_3^{C^1}.  \]
As a conclusion we see that the upper bound in Theorem~\ref{Thm: computational efford closeness centrality} might be strict.
\end{exa}

    For every centrality measure suggested in this section so far, there is a maximal centrality version, for which a computation for all possible $p$ is necessary. Thus, it is of interest to decide whether this is really needed. It might be more reasonable to compute the relevant information just for some $p$ which are of high interest, or to use upper bounds like in Equations (\ref{Eq: computational efford degree centrality}) and (\ref{Eq: upper bound closeness centrality}). %\newpage    

Next, we give a definition of \textit{eigenvector centrality} for vertices in a simplicial complex similar to the one in~\cite[Section 3.2]{Serrano}. This shows by way of example, that other known centrality measures for vertices in graphs can be generalized as well. But first we make some preliminary considerations. 
\emph{Irreducible} matrices are square matrices $A$ that are not
conjugated to an upper block triangular matrix $B=PAP^{-1}$ with two non-trivial blocks by using (only) permutation matrices $P$.

\begin{lem} \label{lem: irreducible connected}
Let $A=(a_{ij})$ be the $m \times m$ adjacency matrix of a simple graph $G$. $A$ is irreducible if and only if $G$ is connected.
\end{lem}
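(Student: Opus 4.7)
The plan is to exploit the fact that the adjacency matrix of a simple (undirected) graph is symmetric, and to translate the combinatorial notion of connectedness into an algebraic statement about the existence of a nontrivial invariant index set under the action of conjugation by a permutation matrix. I will prove the two implications separately, by contraposition in both directions.

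First I would handle the ``if'' direction. Assume $G$ is connected, and suppose for contradiction that $A$ is reducible, so there exists a permutation matrix $P$ with $B := PAP^{-1}$ block upper triangular, i.e.\ there is $1 \leq r \leq m-1$ with $B_{ij} = 0$ whenever $i > r$ and $j \leq r$. Since $A$ is symmetric and $P^{-1} = P^{T}$, the matrix $B$ is also symmetric, so in fact $B_{ij} = 0$ for all pairs $(i,j)$ with one index in $\{1,\dots,r\}$ and the other in $\{r+1,\dots,m\}$. Reading off the conjugation, $P$ corresponds to a relabelling of the vertex set $V = \{v_1,\dots,v_m\}$, and it induces a partition $V = V_1 \sqcup V_2$ with $|V_1| = r$, $|V_2| = m-r$, both nonempty, such that $a_{ij} = 0$ whenever $v_i \in V_1$ and $v_j \in V_2$. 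Hence no edge of $G$ joins $V_1$ with $V_2$, contradicting the connectedness of $G$.

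For the ``only if'' direction, assume $G$ is disconnected. Pick a connected component $C \subsetneq V$ and set $V_1 = C$, $V_2 = V \setminus C$; both are nonempty. Choose any permutation $P$ of the vertices that lists the elements of $V_1$ first and those of $V_2$ afterwards; the corresponding permutation matrix $P$ then yields $PAP^{-1}$ in block diagonal form $\mathrm{diag}(A_1,A_2)$, where $A_1$ and $A_2$ are the adjacency matrices of the subgraphs induced on $V_1$ and $V_2$. In particular $PAP^{-1}$ is block upper triangular with two nontrivial blocks, so $A$ is reducible.

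The main conceptual point---and the only step that requires a moment of care---is the use of symmetry in the first direction to upgrade ``block upper triangular'' to ``block diagonal''. Without this observation the argument would only give a one-way absence of edges (say, no edge from $V_2$ to $V_1$), which is already enough for an undirected graph but deserves explicit mention to keep the translation between the algebraic definition and the combinatorial partition transparent. No nontrivial calculation beyond this permutation bookkeeping is required.
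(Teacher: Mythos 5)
Your proof is correct and takes essentially the same route as the paper's: both reduce the statement to the equivalence between irreducibility and the nonexistence of a nontrivial vertex partition with no edges crossing it. You merely spell out the permutation bookkeeping and the use of symmetry (to pass from block upper triangular to block diagonal) that the paper's terse two-sentence equivalence chain leaves implicit.
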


\begin{proof}
$A$ being irreducible is equivalent to the fact that there do not exist any two disjoint sets $\emptyset \neq I,J \subseteq \{1, \ldots, m\}$ with $I \cup J= \{1, \ldots, m\}$, where for each pair $(i,j) \in I \times J$ it holds $a_{ij} =0$. This is equivalent to $G$ being a connected graph.
\end{proof}

The next corollary is well-known for an $m \times m$ nonnegative matrix $A$ and an associated directed graph $\mathcal{G}(A)$ (see, \eg~\cite[Theorem 1.]{Gaubert}) and can be given in an undirected setting:

\begin{cor} \label{Perron-Frobenius}
Let $G$ be a connected simple  graph with at least one edge, $A$ be its graph-adjancency matrix, and  $\lambda>0$ the largest eigenvalue of $A$.
Then $A$ has a unique normalized eigenvector $v$ with respect to $\lambda$ that has nonnegative entries. 
\end{cor}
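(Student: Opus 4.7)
The plan is to deduce this directly from the Perron--Frobenius theorem for nonnegative irreducible matrices, which is the result cited via Gaubert just before the statement. So the proof will be a verification that the hypotheses of Perron--Frobenius are met by the adjacency matrix $A$ of the graph $G$ in question.

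First I would observe that $A$ has only entries $0$ and $1$, so it is a nonnegative matrix; moreover, since $G$ has at least one edge, $A$ is not the zero matrix, so its spectral radius is strictly positive, and because $A$ is symmetric this spectral radius coincides with the largest eigenvalue $\lambda$. Next, I would invoke Lemma~\ref{lem: irreducible connected}: since $G$ is connected, its adjacency matrix $A$ is irreducible. Thus $A$ is a nonnegative irreducible matrix with spectral radius $\lambda>0$, which is exactly the setting of Perron--Frobenius.

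Applying that theorem (in the form stated for the cited reference), the eigenvalue $\lambda$ is simple, that is, its algebraic (and hence geometric) multiplicity equals $1$, and the one-dimensional eigenspace $E_\lambda$ is spanned by a vector with strictly positive entries. Fix such a spanning vector and normalize it (for instance in the Euclidean norm) to obtain the desired $v$. Existence of a nonnegative normalized $\lambda$-eigenvector is then immediate.

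The only step that needs a word of care is uniqueness, which I would dispatch as follows. Because $\dim E_\lambda = 1$, any $\lambda$-eigenvector is a scalar multiple of the positive vector just produced; after normalization only the two vectors $\pm v$ remain, and exactly one of them, namely $v$ itself, has all nonnegative entries. I do not expect any serious obstacle here: once Perron--Frobenius and Lemma~\ref{lem: irreducible connected} are in hand, the corollary is essentially a bookkeeping statement about which of $v$ and $-v$ is selected by the nonnegativity constraint.
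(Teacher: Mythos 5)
Your proposal is correct and follows essentially the same route as the paper: establish irreducibility of $A$ via Lemma~\ref{lem: irreducible connected}, note $A\neq 0$ so $\lambda>0$, and invoke the Perron--Frobenius theorem for nonnegative irreducible matrices. You simply spell out the uniqueness bookkeeping (one-dimensional eigenspace, so only $\pm v$ survive normalization and nonnegativity selects $v$) that the paper leaves implicit.
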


\begin{proof}
   This follows by Lemma~\ref{lem: irreducible connected} and the version of \textit{Perron-Frobenius} in~\cite[Theorem 1.1.2 (i)]{Lemmens}, where $r(A)$ is the largest eigenvalue, since $A \neq 0$.
\end{proof}

Before one can define eigenvector centralities for a given simplicial complex, such a complex needs to have a certain property:

\begin{Def}
Let $\Delta$ be a simplicial complex and $p \in \N_{> 0}$. Then $\Delta$ is called  $p$\textit{-upper  connected}, if for each pair of two vertices $\sigma,\Tilde{\sigma} \in \Delta$
there exists a path 
\[ 
\sigma_1 \sigma_2  \cdots \sigma_r 
\]
of vertices $\sigma_i$ for $i=1, \ldots, r$ 
where
$\sigma_1=\sigma$, $\sigma_r=\Tilde{\sigma}$,
and where each two consecutive vertices in the path lie in a common $p$-face of $\Delta$. 
\end{Def}

For a given simplicial complex $\Delta$ 
with vertices $\sigma_1,\dots, \sigma_m$ 
(which are fixed for remaining part of this section) and $1\leq p \leq \dim \Delta$ 
its \emph{$p$-adjacency matrix} $A_p$
is defined via its entries
\[ a_{ij} =  \left\{
\begin{array}{ll}
0 & \, \text{if } i= j \text{ or }  \sigma_i\not\sim_{U_p} \sigma_j,\\
1 & \text{otherwise.}
\end{array}
\right.\]

\begin{thm} \label{Cor: Perron-Frobenius}
Let $\Delta$ be a $p$-upper connected simplicial complex 
with $p$-adjacency matrix $A_p \neq 0$. Let $\lambda>0$ be the largest eigenvalue of $A_p$. Then $A_p$ has a unique normalized eigenvector $c^{E^p}$ with respect to $\lambda$ that has non-negative entries.
\end{thm}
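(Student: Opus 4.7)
The plan is to reduce the statement to the graph-theoretic Perron-Frobenius corollary already established for simple graphs, by manufacturing an auxiliary graph whose adjacency matrix coincides with $A_p$.

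Concretely, I would first define a simple graph $G_p$ on the vertex set $\{\sigma_1, \ldots, \sigma_m\}$ of $\Delta$ by declaring $\sigma_i \sigma_j$ to be an edge of $G_p$ exactly when $\sigma_i \sim_{U_p} \sigma_j$. The definition of the $p$-adjacency matrix then says, tautologically, that $A_p$ is the graph-adjacency matrix of $G_p$. Since $A_p \neq 0$, the graph $G_p$ has at least one edge.

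Next, I would translate the hypothesis that $\Delta$ is $p$-upper connected into connectedness of $G_p$. By definition of $p$-upper connectedness, any two vertices $\sigma, \tilde{\sigma} \in \Delta$ are joined by a sequence $\sigma_1 \sigma_2 \cdots \sigma_r$ with $\sigma_1=\sigma$, $\sigma_r=\tilde\sigma$, such that consecutive vertices lie in a common $p$-face; but lying in a common $p$-face is exactly the relation $\sim_{U_p}$, so consecutive vertices form an edge of $G_p$. Thus the sequence is a path in $G_p$, and $G_p$ is connected.

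Finally, I would invoke Lemma~\ref{lem: irreducible connected} to conclude that $A_p$ is irreducible, and then apply Corollary~\ref{Perron-Frobenius} to the connected simple graph $G_p$ with at least one edge: its graph-adjacency matrix $A_p$ admits a unique normalized eigenvector with non-negative entries corresponding to its largest eigenvalue $\lambda > 0$. Calling this eigenvector $c^{E^p}$ yields the claim. I do not expect any genuine obstacle here, since the whole argument is a direct unpacking of definitions followed by citations of the preceding lemma and corollary; the only subtlety worth stating cleanly is the identification of $A_p$ with the graph-adjacency matrix of $G_p$, which is where the hypotheses $A_p \neq 0$ and $p$-upper connectedness get used.
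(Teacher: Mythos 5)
Your proposal is correct and follows essentially the same route as the paper: both identify $A_p$ with the graph-adjacency matrix of the auxiliary graph whose edges are the $p$-upper adjacent vertex pairs (the $1$-skeleton with edges not lying in a $p$-simplex removed), check connectedness from $p$-upper connectedness, and apply Corollary~\ref{Perron-Frobenius}. Your write-up is merely more explicit about the connectedness step than the paper's one-line argument.
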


\begin{proof}
The theorem follows directly from the fact that the $p$-adjacency matrix $A_p$ of $\Delta$ is also the graph-adjacency matrix $A$ of the underlying connected graph $G$ of $\Delta$ (also called  its 1-skeleton), where additionally all edges are removed that do not lie in a $p$-simplex. Applying Corollary~\ref{Perron-Frobenius} concludes the proof. 
\end{proof}
Now we are able to introduce:
\begin{Def} \label{Def: Eigenvector centrality}
Let $p \in \N_{> 0}$ and $\Delta $ be a $p$-upper connected simplicial complex with $A_p\neq 0$
and let $c^{E^p}$ be chosen as in Theorem~\ref{Cor: Perron-Frobenius}. Set $c^{E^p}=(c_{1}^{E^p} , \ldots, c_{m}^{E^p})\in \mathbb{R}^m_{\geq 0}$.
The $p$\textit{-eigenvector centrality} is defined by mapping 
$\sigma_i$ to 
\[
c_{\sigma_i}^{E^p}=c_{i}^{E^p}.  
\]
Moreover, the \textit{maximal simplicial eigenvector centrality}
is defined as
\[
c_{\sigma_i}^{E^{*}} = \sum_{1 \leq p \leq \dim \Delta} c_{\sigma_i}^{E^p}.
\] 
\end{Def}

\begin{rem}
The $p$-\emph{eigenvector centrality} associates a score $c_{\sigma_i}^E$ to every vertex $\sigma_i \in \Delta$, which is derived in a particular way from
$ 
(A_p-\lambda I_m)x=0,  
$
where $I_m$ is the $m\times m$-\emph{identity matrix}. 
More precisely,  using the latter equation 
$c_{\sigma_i}^{E^p}$ can be computed as
\[ c_{\sigma_i}^{E^p} = \frac{1}{\lambda}\sum_
{j=1}^m a_{ij}c_{\sigma_j}^{E^p}.
\]
    Note that the $p$-eigenvector centrality from Definition~\ref{Def: Eigenvector centrality} for a $p$-upper connected simplicial complex is indeed a generalization of the eigenvector centrality for connected graphs (see, \eg~\cite{Borgatti, Newman}) for $p=1$.
\end{rem}

The next example shows that the maximal simplicial eigenvector centrality 
is in general not monotone as a function in $p$.

\begin{exa} \label{Exa: eigenvector centrality}
We claim that in the following $2$-connected simplicial complex $\Delta$ there exist two vertices $\sigma, \tilde{\sigma} \in \Delta$ with              \[c_{\sigma}^{E^1} > c_{\sigma}^{E^2} \text{ and } 
    c_{\tilde{\sigma}}^{E^2} > c_{\tilde{\sigma}}^{E^1}.\]
 \begin{center}
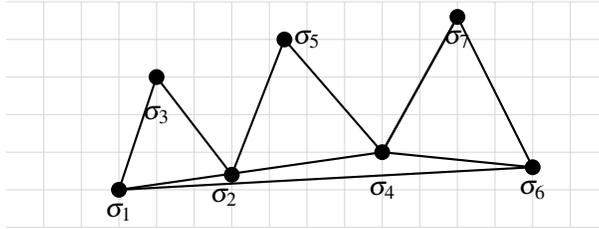

\begin{tikzpicture}  

\draw[step=0.5, gray!30] (4,0.5) grid (12,3.5);

\coordinate (A) at (5.5,1); 
\coordinate (C) at (6,2.5); 
\coordinate (F) at (9,2.5); 
\coordinate (E) at (8,1.5); 
\coordinate (D) at (7,1.2);
\coordinate (I) at (7.7,3);

\coordinate (G) at (9,1.5); 
\coordinate (H) at (10,3.3);
\coordinate (J) at (11,1.3);

\node[font=\bfseries] (3) at (5.5,0.7) {$\sigma_1$};
\node[font=\bfseries] (3) at (6.9,0.9) {$\sigma_2$};
\node[font=\bfseries] (3) at (6,2) {$\sigma_3$};
\node[font=\bfseries] (3) at (8,3) {$\sigma_5$};

\node[font=\bfseries] (3) at (9,1) {$\sigma_4$};
\node[font=\bfseries] (3) at (10,3) {$\sigma_7$};
\node[font=\bfseries] (3) at (11,1) {$\sigma_6$};

 \fill[black]  (A) circle [radius=3pt]; 
\fill[black]  (C) circle [radius=3pt]; 
\fill[black]  (D) circle [radius=3pt]; 	
 
\fill[black]  (I) circle [radius=3pt]; 
\fill[black]  (G) circle [radius=3pt]; 
\fill[black]  (H) circle [radius=3pt]; 
\fill[black]  (J) circle [radius=3pt]; 

\draw[-,>=latex, thick] (A) to (C);
%\draw[-,>=latex, thick] (A) to (D);
\draw[-,>=latex, thick] (C) to (D);
\draw[-,>=latex, thick] (I) to (D);
\draw[-,>=latex, thick] (G) to (H);
\draw[-,>=latex, thick] (G) to (I);
\draw[-,>=latex, thick] (H) to (J);
\draw[-,>=latex, thick] (H) to (G);
\draw[-,>=latex, thick] (G) to (J);

\draw[-,>=latex, thick] (A) to (J);
\draw[-,>=latex, thick] (A) to (G);
\end{tikzpicture}
\captionof{figure}{\small{Simplicial complex $\Delta$}} \label{Abb: simplicial complex}

\end{center}

Indeed, the adjacency matrices $A_1$ for $p=1$ and $A_2$ for $p=2$ are

\[A_1 = \left( \begin{array}{rrrrrrr}
0 & 1 & 1 & 0& 0 & 0 & 0 \\
1& 0 & 1 & 1& 1 & 0 & 0  \\
1 & 1 & 0& 0& 0 & 0 & 0 \\
0& 1 & 0 & 0& 1 & 1 & 1 \\
0& 1& 0 &1& 0 &0 & 0\\
0&0&0&1&0&0&1 \\
0& 0 & 0 &1&0&1&0
\end{array} \right) \text{ and } 
A_2 = \left( \begin{array}{rrrrrrr}
0 & 1 & 1 & 0& 0 & 1 & 0 \\
1& 0 & 1 & 1& 1 & 0 & 0  \\
1 & 1 & 0& 0& 0 & 0 & 0 \\
0& 1 & 0 & 0& 1 & 1 & 1 \\
0& 1& 0 &1& 0 &0 & 0\\
1&0&0&1&0&0&1 \\
0& 0 & 0 &1&0&1&0
\end{array} \right).\]

They just differ by the entries $(1,6)$ and $(6,1)$. 
For $A_1$ let $v_1$ be the normalized eigenvector for the largest eigenvalue and observe that
\[v_1 \approx (1.33, 1.70, 1, 1.70,1.12,1.33,1).\]
Choosing similar $v_2$ for $A_2$ we obtain
\[v_2 \approx (1, 1.81, 1, 1.81,1.29,1,1).\] 
Hence, we finally find $c_{6}^{E^1} > c_{6}^{E^2} \text{ and }c_{4}^{E^2} > c_{4}^{E^1}.$
\end{exa}

\section{Building patterns} \label{section: Building patterns}

In this section, we aim to find sets of suitable features, which describe certain properties of interest of vertices in given simplicial complexes, as attackers or attacked ones. Then patterns are represented as exactly these sets of features. We give strategies and possibilities for constructing such patterns. Our definitions are motivated by the ones given in ~\cite{Atzmüller, Atzmüller2}. In comparison to~\cite{Atzmüller} the patterns will not only be equipped with graph-based features, but also with simplicial ones with respect to Vietoris-Rips complexes. In particular, centrality measures are taken into account (see Section~\ref{Section: Simplicial centrality measures}). %\\

To create patterns, a set of \textit{individuals} $I$ is always considered and also a set of all possible \textit{features} $F$, where a single \emph{feature} is a function 
\[f\colon I \to P_f,\] which maps an individual $i\in I$ to its property $b \in P_f$. Here, $P_f$
might be chosen differently for each feature.  Moreover, $ \mathcal{F}= \{ f_b \,|\, f \in F \text{ and } b \in P_f \}$ is the set of functions given by
\[ 
f_b \colon I \to  \{ 0,1\}
\] 
where for $i \in I$ and $f \in F$ we have $f(i) =b$ if and only if $f_b(i) =1$. 
Hence, the map $f_b$ indicates whether $i\in I$ fulfills $b$ or not. 

An \textit{attributed} simplicial complex $\Delta$ 
is a simplicial complex where each 
vertex $\sigma \in \Delta$ has an associated \textit{dataset} \[D(\sigma)= (i, F(i)),\] where $i\in I$ is chosen and $F(i)= \{ f(i) \,|\, f \in F  \}$, in which the information regarding the considered features about the corresponding individual $i$ to $\sigma$ is stored. 

From now on only attributed simplicial complexes are considered and $F$ as above is fixed. In this manuscript we build a pattern by using features exactly in the following way:
\begin{Def} 
\label{Def: pattern}
Let $f^1,\dots,f^k\in F$ and choose  $b_{j}\in P_{f^j}$ for $j=1,\dots,k$.
%$b_j$ are fixed properties of $f^j \in F$. 
%for $i=1, \ldots k$.
A set 
\[
p= \{ f^1_{b_{1}} , \ldots, f^k_{b_{k}}\} 
\]
is called a $k$-\textit{pattern}
with respect to $F$. We denote the set of $k$-patterns by $\mathcal{F}^{(k)}$.
Then, we say that $p\in \mathcal{F}^{(k)}$ is \textit{true} with respect to $i \in I$, if 
\[
p(i) = \{ f^1_{b_{1}}(i) , \ldots, f^k_{b_{k}}(i)\} = \{ 1\}.
\]
The \emph{support of $p$} is
\[ s_p = \{ i \in I \,|\, p(i) = \{1\}\}  \subseteq I,\] 
which is also called the $p$-\textit{fulfilling individuals}. Set $i_p:=|s_p|$.
\end{Def}

\begin{rem}\
\begin{enumerate}
     \item Let $\Delta$ be an attributed simplicial complex and $p$ be a compatible pattern. This pattern can also be considered as true or false with respect to a vertex $\sigma \in \Delta$ by taking the value $p(i)$, where $i$ is the first component of $D(\sigma)$.
    \item In the literature, \eg in~\cite{Atzmüller}, there exist various ways of defining patterns. For example, one could equip the patterns with more structure by using tuples instead of sets. This provides the opportunity to consider multiplicities and to order the features. In this paper pattern are always sets which is suitable for 
    the considered applications below.
\end{enumerate}
\end{rem}

For binary functions as already considered above, 
it is useful to introduce the following notation.
\begin{Def}
Let $p$ be a pattern.
A \textit{target} $t$ is a binary function
$t \colon I \to \{ 0, 1\}.$
The \textit{target share} of $t$ with respect to $p$ is  
\[
t_p :
= \frac{|\{ i \in s_p \,|\, t(i) =1\}|}{i_p}
= \frac{|\{ i \in I \,|\, p(i)=1 \text{ and } t(i) =1\}|}{i_p}.
\]
\end{Def}

For a given target, we search for patterns with a high target share. To find a useful measurement for the pattern quality and, thus, for its interestingness, we consider a \textit{quality function} (see~\cite{Atzmueller:15a,Atzmüller, Grosskreutz} for further details):

\begin{Def}
Given a target $t$, a $k$\textit{-quality function} is a real-valued function \[ q_t \colon \mathcal{F} ^ {(k)} \to \R.\] The \textit{quality} of a $k$-pattern $p$ with respect to $t$ is given by $q_t(p)$.
\end{Def}

Let $t_0 = \frac{|\{ i \in I\,|\, t(i) =1\}|}{|I|}$ be the share of the individuals that fulfill a target $t$ with respect to all individuals
and choose $a\in \mathbb{R}$ such that $(i_p)^a$ is well defined. For example, the quality of a $k$-pattern $p$ with respect to $t$ can now be determined by the following $k$-quality function:
\begin{align}
     q_t^a (p) = (i_p)^a \cdot (t_p - t_0 ).
\end{align}

Here, we target situations, for example, where $t_0$ is rather small in comparison to $t_p$ and $t_p$ itself should be much larger. The size of the pattern in terms of contained instances is then weighted by parameter $a$. Following this approach, some well-known quality functions in the literature~\cite{Atzmueller:15a} are:
\begin{itemize}
    \item the \emph{gain quality function} $ q_t^0$,
    \item the \emph{binomial test quality function}  $ q_t^{0.5}$,
    \item the \emph{Piatetsky-Shapiro quality function}  $ q_t^1$.
\end{itemize}

Then, the goal is to find patterns with a high quality value, \ie a high $q_t^a (p)$ for a pattern $p$, with a given $a$ and the target $t$ as the concept of interest.

In the remaining part of this section strategies are discussed to build and then to evaluate simplicial-based patterns. For a sample data set we investigate the problem whether simplicial features help to increase the quality of corresponding patterns in comparison to graph-based features. For this purpose, we focus on patterns in the context of specifically constructed synthetic data, which specifically features analysis options for our evaluation strategies.

For generating synthetic data in our application context, we rely on standard approaches from the field of complex networks, applying the susceptible infectious (SI) model, \eg~\cite{crepey2006epidemic,Li} for generating the non-attacker/attacker structures. Thus, at first we create synthetic data by building a simplicial complex, whose vertices are assigned as attackers or non-attackers (which are partially attacked). Using the obtained synthetic dataset, we create features using the available metrics on simplicial complexes and networks, respectively. Using these features, we can then construct patterns with respect to the targets attacker/non-attacker, for studying the impact of metrics on simplicial complexes.

\begin{rem} \label{Rem: Create simpl. complex}
The strategy for constructing the synthetic data is as follows:
\begin{enumerate}
    \item Choose an existing and real world social network which has not too many individuals. Then, these correspond to vertices, which are connected by an edge if there is interaction on the level of individuals. 
    \item Choose a number $k\ \in \mathbb{N}$ and select $k$ random vertices to be attackers.
    \item Create $s$ time periods of attacks on the given social network. Individuals who have been attacked may mutate into new attackers. 
    \item Determine a resulting network from (iii) for further investigations.
    \item Build the Vietoris-Rips complex of the underlying network from (iv) using the metric $d_5$ from Section~\ref{Section: preliminaries}.
\end{enumerate}

\end{rem}

In the following, we apply this algorithm on one specific social network, for illustrating our modeling and analysis approach using simplicial complexes.
In particular, for step (i) here we use the congress network from~\cite{Fink} which is reduced to the first $20$ individuals. Thus, all other vertices (with numbers 21--475) as well as all edges involving at least one of such vertices 
are deleted. The resulting network is illustrated in Figure~\ref{fig:congress network}.
\begin{figure}[ht]
\centering
\includegraphics[scale=.4]{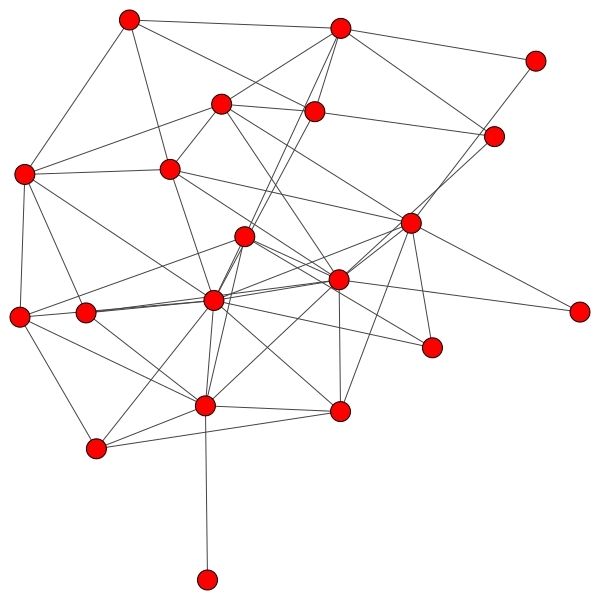}
\caption{Modified congress network}
\label{fig:congress network}
\end{figure}
For step (ii) four data points are randomly chosen with respect to the discrete uniform distribution. Then, we perform steps (iii) and (iv) for $s=1$ and use the \emph{SI model} approach described below. Finally, the Vietoris-Rips distance is chosen as $r= 1/2$ for step (v); 
see Definition~\ref{Def: Vietoris-Rips}.

For (iii) it remains to discuss shortly the SI model for modelling disease infections at a time $t$, which is  well-known in the literature (see, \eg~\cite{Li}). Let
\begin{align*}
    &S \text{ denote the number of susceptible  individuals (non-attackers), and}  \\
    &V \text{ the number of virus infectious individuals (attackers)}, 
\end{align*}
 where $S= f(t) \text{ and } V= g(t)$ for functions $f$ and $g$ with values in $\mathbb{N}$ at a time $t \geq 0$. The following assumptions are made:
\begin{enumerate}
    \item No births and deaths happen, \ie the number of total individuals $N$ is constant at any time $t$:
\begin{align} \label{Eq: constant individuals}
  f(t)+g(t) = N \text{ for all } t \geq 0,
\end{align}
    \item As suggested in~\cite[Equation 1.21]{Li}, we choose the infection reproducing rate as \[r=P\cdot \lambda/N,\] where $P$ is the probability of a contact to produce an infection and $\lambda$ is the average contact number which is also given by the average $(1,1)$-degree of the vertices in the corresponding  network.
\end{enumerate}

The change of the susceptibles is modelled by a multiple of the product of the number of the two groups (attackers and non-attackers), since this model assumes that the rate of change is proportional to the number of susceptibles and the number of infectious individuals. Moreover, $- \frac{dS}{dt} = \frac{dV}{dt}$ due to Equation (\ref{Eq: constant individuals}). 

Hence, the equation for the change of infectious and susceptible individuals is given by:
\begin{align*}
    \frac{dS}{dt} = -c V \cdot S \,  \text{ and }\,
    \frac{dV}{dt} = c V \cdot S.
\end{align*}
for  a constant $c\in \mathbb{R}$. Thus, one has
\begin{align*}
    \frac{dV}{dt} = cN V \cdot (1-V/N).
\end{align*}
Assuming that $V \ll N$ it follows $\frac{V}{N} \sim 0$. Hence,
\begin{align*}
    \frac{dV}{dt} \approx c N \cdot V = r V
\end{align*}
with the \emph{change rate} $r=cN$. Since the resulting differential equation
\begin{align} \label{Eq: Differentialgleichung}
     \frac{dV}{dt} = r V (1-\frac{V}{N})
\end{align}
 is separable, one obtains the integral equation
\begin{align*} 
  \int \frac{1}{V(1-\frac{V}{N})} \ dV = \int r \ dt,
\end{align*}
which can be solved by using partial fraction decomposition:
\begin{align*} 
 \int r \ dt= \int \frac{A}{V} + \frac{B}{1-\frac{V}{N}} \ dV=\int \frac{A(1-\frac{V}{N})}{V(1-\frac{V}{N})} + \frac{BV}{V(1-\frac{V}{N})} \ dV,
\end{align*}
which leads to the solution $A= 1$ and $B= 1/N$. So, we obtain
\begin{align*} 
 \int r dt= \int \frac{1}{N(1-\frac{V}{N})} dV+ \int\frac{1}{V} dV.
\end{align*}
Observe that by calculating the integrals we have
\begin{align*} 
 r t+c_1= \ln(V)- \ln(N-V) = \ln\frac{V}{N-V}.
\end{align*}
Finally, one receives
\begin{align*} 
 e^{r t}e^{c_1}= \frac{V}{N-V}.
\end{align*}
This equation can be transformed into 
\begin{align*} 
    V= e^{rt} c_2N- e^{rt}c_2V
\end{align*}
by defining $c_2 := e^{c_1}$. Then, $V$ can be rewritten as
\begin{align} \label{Eq: V}
    V= \frac{e^{rt}c_2N}{1+e^{rt}c_2} = \frac{e^{rt}c_2N}{e^{rt}c_2(1+e^{-rt}c_2^{-1})}= \frac{N}{1+e^{-rt}c}
\end{align}
by defining another constant $c:= c_2^{-1}$. This constant $c$ can be computed by inserting $t=0$ into the previous equation. The equation
\[ 
g(0) = \frac{N}{1+e^{-r\cdot0}c}
\]
leads to
\[ c = \frac{N}{g(0)}-1.\]
Equation (\ref{Eq: V})  implies that
\begin{align}\label{Eq: Infizierte zur Zeit t}
    g(t) = \frac{N}{1+(\frac{N}{g(0)}-1) e^{-rt}}.
\end{align}
An analogous consideration for $S$ shows that
\begin{align}\label{Eq: susceptibles zur Zeit t}
    f(t) = \frac{N}{1+(\frac{N}{g(0)}-1) e^{rt}}.
\end{align}
In the limit one has
\begin{align*}
    \lim_{t \to \infty} g(t)= \lim_{t \to \infty}\frac{N}{1+(\frac{N}{g(0)}-1)\cdot e^{-rt}} = N \text{ and }
     \lim_{t \to \infty} f(t)= \lim_{t \to \infty}\frac{N}{1+(\frac{N}{g(0)}-1)\cdot e^{rt}} = 0,
\end{align*}
since $e^{-rt} \overset{t \to \infty}{\rightarrow}0$. This means that in the limit everyone is infected.

Applying the SI model and, in particular, Equations (\ref{Eq: Infizierte zur Zeit t}) and (\ref{Eq: susceptibles zur Zeit t}) we proceed as follows for step (iii) in Remark~\ref{Rem: Create simpl. complex} using the modified congress data set:

\begin{Alg}  \label{Alg: construct data}\,
   
\begin{enumerate}
\item Choose randomly a start population of  $k=g(0)=4$ attackers. 
    \item Compute the number of virus infectious individuals $g(1)$ where the probability of a contact to produce an infection is chosen as $P=0.2$.
    \item From the set of susceptible individuals, which are connected to at least one infectious individual, choose randomly  $g(1)-g(0)$ many individuals with respect to the discrete uniform distribution. 
    \item The set of virus infectious individuals at time $1$ is then given by the infectious individual at time $0$ together with the new ones.
\end{enumerate}
  
\end{Alg}

After applying Algorithm~\ref{Alg: construct data} on the modified congress network data set, one obtains the attacker data set which is shown in Figure~\ref{fig:attackers after one time step} below (the blue vertices correspond to the attackers).

\begin{figure}[ht]
\centering
\includegraphics[scale=.4]{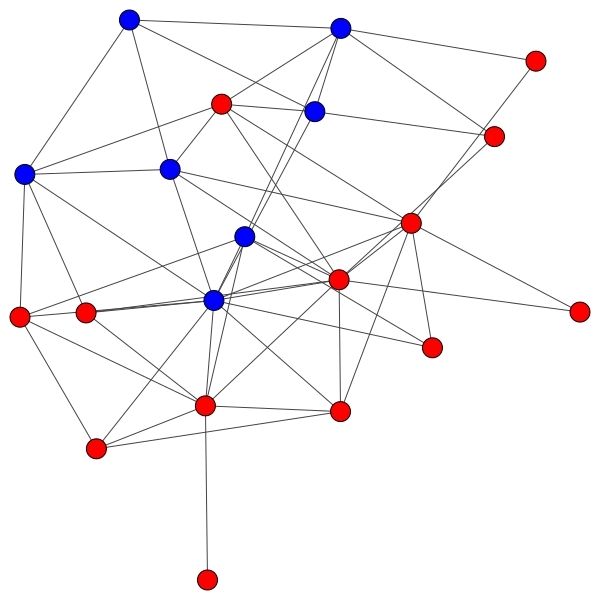}
\caption{Modified congress network with attackers after one time step of SI}
\label{fig:attackers after one time step}
\end{figure}

With the given synthetic data one is able to build simplicial patterns using measures from Section~\ref{Section: Simplicial centrality measures}. 
To keep the discussion simple, we focus on patterns of length $1$, which means that we only concentrate on one feature in each pattern. 

More precisely, the following features are discussed, where $\mathds{1}$ is the indicator-function and $k , l \in \R$. 

\begin{enumerate}

\item $\mathds{1}(c_\sigma^{D^{(p,p)}} <k) \text{ and } \mathds{1}(c_\sigma^{D^{(p,p)}} > l), $
    \item $\mathds{1}(c_\sigma^{E^p} < k) \text{ and } \mathds{1}(c_\sigma^{E^p} > l) ,$\,  
    \item $\mathds{1}(c_\sigma^{C^p} < k) \text{ and } \mathds{1}(c_\sigma^{C^p} >l).$

\end{enumerate}

We compare the chosen feature patterns of length 1 for $p=1$, which corresponds to the graph-based case that is already discussed in~\cite{Atzmüller}, and for larger $p$ with each other by computing their quality $q_{t_i}^0$ with respect to the target of finding attackers and non-attackers. This means that targets $t_1 $ and $t_2$ are considered, where $t_1$ maps an individual to $1$ if it is a non-attacker and otherwise to $0$ and $t_2$ maps an individual to $1$ if it is an attacker and otherwise to $0$. For (i) the quality values of the features 
\begin{align*}
    p_1=(c_\sigma^{D^{(1,1)}} <k_1), \, p_2=(c_\sigma^{D^{(2,2)}} <k_2)\,
    \text{ and }\, p_3=(c_\sigma^{D^{(3,3)}} <k_3)
\end{align*}
are analyzed with respect to $t_1$. 
Moreover, the quality values of the features 
\[p_3=(c_\sigma^{D^{(1,1)}} >l_1), \, p_4=(c_\sigma^{D^{(2,2)}} >l_2)\, \text{ and } \, p_5=(c_\sigma^{D^{(3,3)}} >l_3)\] 
are analyzed with respect to $t_2$. One could also construct length-one patterns using features $c_\sigma^{D^{(p,p)}}$ for $p\geq 4$, but in this example it suffices to consider $p < 4$. As illustrated in Figure~\ref{fig:attackers after one time step} no simplices exist in the network
with dimensions higher than $3$  (see Table~\ref{Tabl:(p,p)-degree centralities}). Here we choose 
\[k_1= 4, k_2= 2 \text{ and } k_3=1 \text{ as well as } l_1= 5, l_2= 14 \text{ and } l_3=7.
\]
These choices turn out to deliver the best quality values $q_{t_1}^0$ (resp. $q_{t_2}^0$) for $c_\sigma^{D^{(i,i)}} > l_i$ (resp. $c_\sigma^{D^{(i,i)}} <k_i$) compared to other possible numbers. 

The features as well as the property of being an attacker or a non-attacker are shown in Table~\ref{Tabl:(p,p)-degree centralities}.

\begin{table}[ht]
%\begin{center}   
\begin{tabular}[h]{l|l|l|l|l|l|l|l|l|l|l|l|l}
\scriptsize{vertex}  & \scriptsize{$c^{D^{(1,1)}}$} & \scriptsize{$c^{D^{(2,2)}}$} & \scriptsize{$c^{D^{(3,3)}}$} &\scriptsize{$c^{D^{(4,4)}}$} & \scriptsize{attacker?}&\scriptsize{non-attacker?} &\footnotesize{$p_1$ }& \scriptsize{$p_2$ }& \footnotesize{$p_3$}& \scriptsize{$p_4$} &\footnotesize{$p_5$} &\scriptsize{ $p_6$}\\
\hline
0& 6 & 5 & 1& 0 & 0 & 1 &0 &0 &0 &1 &0 & 0 \\
 1&3&2&0&0&0&1&1&0&1&0&0&0 \\
2& 3&1&0&0&0&1&1&1&1&0&0&0 \\
3& 5&6&2&0&0&1&0&0&0&0&0&0 \\
4& 6&5&0&0&1&0&0&0&1&1&0&0 \\
5& 2&1&0&0&0&1&1&1&1&0&0&0 \\
6& 1& 0& 0& 0& 0& 1&1&1&1&0&0&0 \\
7& 5&7&3&0&0&1&0&0&0&0&0 &0 \\
8& 8&12&6&0&0&1&0&0&0&1&0&0\\
9& 6&3&0&0&1&0&0&0&1&1&0&0 \\
10& 2&0&0&0&0&1&1&1&1&0&0&0 \\
11& 8&9&3&0&0&1&0&0&0&1&0&0 \\
12& 6&8&2&0&1&0&0&0&0&1&0&0 \\
13& 10&14&6&0&0&1&0&0&0&1&0&0 \\
14& 4&2&0&0&1&0&0&0&1&0&0&0 \\
15& 6&6&2&0&1&0&0&0&0 &1&0&0 \\
16& 4&4&1&0&0&1&0&0&0&0&0&0 \\
17& 12&21&8&0&1&0&0&0&0&1&1&1 \\
18& 5&3&0&0&1&0&0&0&1&0&0&0
\\
19& 4&5&2&0&0&1&0&0&0&0&0&0
\end{tabular}
%\end{center}
\caption{$(p,p)$-degree centralities}
\label{Tabl:(p,p)-degree centralities}
\end{table}

%\begin{table}[ht]
%\centering
%\includegraphics[scale=.4]{degree centralities.png}
%\caption{$(p,p)$-degree centralities}
%\label{Tabl:(p,p)-degree centralities}
%\end{table}

More precisely, the considered patterns have the following quality values: 
\begin{align*}
q^0_{t_1}(p_1)&= i_p^0 \cdot (t_p-t_0) = 5^0 \cdot (5/5 - 13/20) = 7/20, \\
    q^0_{t_1}(p_2)&= 4^0 \cdot (4/4 - 13/20) =7/20, \\
    q^0_{t_1}(p_3)&=  9^0 \cdot (5/9 - 13/20) = 17/180, \\
    q^0_{t_2}(p_4)&=  9^0 \cdot (5/9 - 7/20) = 37/180, \\
    q^0_{t_2}(p_5)&=  1^0 \cdot (1 - 7/20) =13/20, \\
    q^0_{t_2}(p_6)&= 1^0 \cdot (1 - 7/20) = 13/20.
\end{align*}

Thus, the $(1,1)$-degree centrality is as good as the $(2,2)$-degree centrality and they have both a higher quality than the $(3,3)$-degree centrality with respect to the pattern $t_1$ and the chosen quality function. The $(2,2)$-degree centrality is as good as the $(3,3)$-degree centrality and they have both a higher quality than the $(1,1)$-degree centrality with respect to the pattern $t_2$ and $q^0_{t_2}$. 
Hence, $p=2$ is the best choice for considering a simplicial degree centrality feature pattern of only length $1$ given the described setup.

The features in (ii) are considered in Table ~\ref{Tabl:(p,p)-eigenvector centralities 1}. Here we analyze the following patterns with respect to $t_1$:
\[p_1=(c_\sigma^{E^1} <k_1), \, p_2=(c_\sigma^{E^2} <k_2)\,\text{ and } \,p_3=(c_\sigma^{E^3} <k_3)\] 
with $k_1= 0.5, k_2= 0.2 \text{ and } k_3=2.$
Moreover, we investigate the following patterns with respect to $t_2$: 
\[p_4=(c_\sigma^{E^1} >l_1), \, p_5=(c_\sigma^{E^2} >l_2)\,\text{ and } \,p_6=(c_\sigma^{E^3} >l_3)\] 
with $l_1= 2.5, l_2= 2.5 \text{ and } l_3=2.5.$
Here the chosen $k_1, k_2, k_3$ and $l_1, l_2, l_3$ deliver the best quality values with respect to $q_{t_1}^0$ and $q_{t_2}^0$. These numbers are not unique with this property. \eg for $l_3$ one can also choose $2.4$. It is not helpful to consider eigenvector centralities for  $p\geq 4$ for the modified congress network, since then the eigenvector centralities are $0$ for all vertices
(see Table~\ref{Tabl:(p,p)-eigenvector centralities 1} for examples). Thus, it is not possible to use such $p\geq 4$ to create patterns for distinguishing attackers and non-attackers. 

\begin{table}[ht]
\begin{center}   
\begin{tabular}[h]{l|l|l|l|l|l|l|l|l|l|l|l|l}
\scriptsize{vertex } & \scriptsize{$c^{(E^{1})}$} & \scriptsize{$c^{(E^{2})}$} & \scriptsize{$c^{(E^{3})}$} & \scriptsize{$c^{(E^{4})}$} & \scriptsize{attacker?} &\scriptsize{non-attacker?} &\scriptsize{$p_1$} & \scriptsize{$p_2$} & \scriptsize{$p_3$}& \scriptsize{$p_4$} &\scriptsize{$p_5$ }& \scriptsize{$p_6$}\\
\hline
0&1.421& 1.279& 0.913 & 0.0& 0 & 1 & 0 &0 &1 &0 &1 &0  \\
 1&0.962&0.973&0.0&0.0&0&1&0&0&1&0&0&0 \\
2& 0.663&0.122&0.0&0.0&0&1&0&1&1&0&0&0 \\
3& 1.384&1.430&1.338&0.0&0&1&0&0&1&0&0&0 \\
4& 1.442&1.455&0.0&0.0&1&0&0&0&1&0&0&0 \\
5& 0.648&0.661&0.0&0.0&0&1&0&0&1&0&0&0 \\
6& 0.298& 0.0& 0.0& 0.0& 0& 1&1&1&1&0&0&0 \\
7& 1.564&1.617&1.760&0.0&0&1&0&0&1&0&0 &0 \\
8& 1.935&1.962&2.134&0.0&0&1&0&0&0&0&0&0\\
9& 0.908&0.384&0.0&0.0&1&0&0&0&1&0&0&0 \\
10& 0.421&0.0&0.0&0.0&0&1&1&1&1&0&0&0 \\
11& 1.828&1.803&1.564&0.0&0&1&0&0&1&0&0&0 \\
12& 1.643&1.640&1.299&0.0&1&0&0&0&1&0&0&0 \\
13& 2.380&2.336&2.293&0.0&0&1&0&0&0&0&0&0 \\
14& 0.772&0.617&0.0&0.0&1&0&0&0&1&0&0&0 \\
15& 1.601&1.514&1.475&0.0&1&0&0&0&1 &0&0&0 \\
16& 1.321&1.362&1.238&0.0&0&1&0&0&1&0&0&0 \\
17& 2.816&2.772&2.565&0.0&1&0&0&0&0&1&1&1 \\
18& 1.014&0.384&0.0&0.0&1&0&0&0&1&0&0&0 
\\
19& 1.187&1.244&1.381&0.0&0&1&0&0&1&0&0&0
\end{tabular}
\end{center}
\caption{$p$-eigenvector centralities}
\label{Tabl:(p,p)-eigenvector centralities 1}
\end{table}

%\begin{table}[ht]
%\centering
%\includegraphics[scale=.5]{eigenvector centralities.png}
%\caption{$p$-eigenvector centralities}
%\label{Tabl:(p,p)-eigenvector centralities 1}
%\end{table}
A computation yields: 
%that the considered patterns have the following confidences: 
\begin{align*}
q^0_{t_1}(p_1)&= 2^0 \cdot (2/2 - 13/20) = 7/20, \\
    q^0_{t_1}(p_2)&=  3^0 \cdot (3/3 - 13/20) =7/20, \\
    q^0_{t_1}(p_3)&= 17^0 \cdot (12/17 - 13/20) = 19/340, \\
    q^0_{t_2}(p_4)&= 1^0 \cdot (1/1 - 7/20) = 13/20, \\
    q^0_{t_2}(p_5)&=  1^0 \cdot (1/1 - 7/20) =13/20, \\
    q^0_{t_2}(p_6)&= 1^0 \cdot (1/1 - 7/20) = 13/20.
\end{align*}

Thus, the $1$-eigenvector centrality is as good as the $2$-eigenvector centrality and they have both a higher quality than the $3$-eigenvector centrality with respect to the pattern $t_1$ and the chosen quality function. 
The $1$-eigenvector centrality is as useful as the $2$-eigenvector centrality and the $3$-eigenvector centrality with respect to the pattern $t_2$. 
So in total the eigenvector-centralities for $p=1$ and $p=2$ are the best choices regarding the simplicial-eigenvector-feature-patterns of length $1$. 

For (iii) only features for $c_\sigma^{C^{1}}$ are considered, since the closeness centrality measure is always $0$ for other $p$, see Table \ref{Tabl:closeness}. 
 The best quality for characterizing non-attackers (target $t_1$) is achieved with the pattern
\begin{align*}
    p_1=(c_\sigma^{C^{1}} <0.024).
\end{align*} 
Moreover, the feature
\begin{align*}
    p_2=(c_\sigma^{C^{1}} >0.027)
\end{align*}
has the highest quality with respect to target $t_2$, \ie for characterizing attackers. As in (ii) the chosen numbers are not unique with respect to this property.

\begin{table}[ht]
\begin{center}   
\begin{tabular}[h]{l|l|l|l|l|l|l}
\scriptsize{vertex} & \scriptsize{attacker?} &\scriptsize{non-attacker?} & \scriptsize{$c^{C^{1}}$} &\scriptsize{$p_1$} & \scriptsize{$p_2$} \\
\hline
0& 0 & 1 & 0.029&1 & 0 \\
1& 1 & 0 & 0.026&0 & 0 \\
2& 1 & 0 & 0.025&0 & 0 \\
3& 0& 1 & 0.028&1 & 0 \\
4& 0& 1 & 0.028&1 & 0 \\
5& 1& 0 & 0.023&0 & 1 \\
6& 1& 0 & 0.02&0 & 1 \\
7& 1& 0 & 0.029&1 & 0 \\
8& 1& 0 & 0.031&1 & 0 \\
9& 1& 0 & 0.027&1 & 0 \\
10& 1& 0 & 0.023&0 & 1 \\
11& 1& 0 & 0.032&1 & 0 \\
12& 0& 1 & 0.030&1 & 0 \\
13& 1& 0 & 0.036&1 & 0 \\
14& 1& 0 & 0.024&0 & 0 \\
15& 0& 1 & 0.031&1 & 0 \\
16& 1& 0 & 0.028&1 & 0 \\
17& 1& 0 & 0.039&1 & 0 \\
18& 0& 1 & 0.029&1 & 0 
\\
19& 1& 0 & 0.025 &1 & 0
\end{tabular}
\end{center}
\caption{$p$-closeness centralities}
\label{Tabl:closeness}
\end{table}

%\begin{table}[ht]
%\centering
%\includegraphics[scale=.5]%{featurecloseness_verbessert.png}
%\caption{$p$-closeness centralities}
%\label{Tabl:closeness centralities}
%\end{table}

More precisely, $p_1$ and $p_2$ have the following quality values: 
\begin{align*}
    q^0_{t_1}(p_1)&= i_p \cdot (t_p-t_0) = 3^0 \cdot (3/3 - 13/20) =7/20, \\
    q^0_{t_2}(p_2)&= i_p \cdot (t_p-t_0) = 13^0 \cdot (6/13 - 7/20) = 29/260. 
\end{align*}

The pattern $p_1$ has a rather high quality and is thus useful for further investigations. The constructed feature pattern with respect to the attacker target $t_2$ is not that beneficial, but there may be additional other possibilities for using this feature (which are not considered here in this manuscript) like combining it with other features in longer patterns. 
Observe that in contrast to (i) and (ii)
in (iii) for the modified congress network it was not possible to construct helpful simplicial features of complexes of higher dimensions than 1, but only graph-based features using the closeness centrality. In consequence it would be reasonable to consider here in the future other networks, where there might be a denser structure for higher dimensional simplices to find vertices that have non-zero $p$-closeness centrality for $p > 1$.
\begin{rem}
To avoid the synthetic data used in this section one has to record non-trivial real intrusion data, which contains not only the attacks, but also the interaction in-between groups of attackers and of attacked IP-addresses. 
This is left as an interesting research problem for the future.
\end{rem}
\section{Outlook} \label{Section: Outlook}
In this work, we applied simplicial complexes for detecting higher-order patterns, as simplicial-based patterns, in the context of network intrusion detection settings.
In Section \ref{section: Building patterns} we considered a synthetic data set and assessed how good the considered simplicial feature patterns worked for it. In the following, we outline perspectives on further eligible strategies to evaluate the usage of simplicial-based patterns for describing targets, \eg to differentiate attacker and non-attackers in networks based on real data. There are several options to continue the investigations from our work:
\begin{enumerate}
    \item Using various (further) datasets to analyze the utility of simplicial feature patterns. For this it is reasonable to vary synthetic or real, large or small data, as well as data from different application domains.
    \item Constructing patterns that use more simplicial complex features at once. It is reasonable to consider patterns that have length $>1$ to study further centrality measures which were not yet mentioned in this manuscript.
    \item Equipping simplicial patterns with further features which are not originating from simplicial centrality measures. It is also possible to construct further features from one given centrality measure by using new conditions. Here are some examples for the latter suggestion to name just a few:
    \begin{itemize}
        \item Maximal dimension of a vertex-$v$-including facet is higher than $k$: \[\mathds{1}(\max_{v \in \sigma, \sigma \in \Delta}{\dim(\sigma) > k}  ).\]
     \item Importance of a vertex-$v$-including facet in the simplicial complex regarding the centrality measure $c$ is higher than $k$: \[\mathds{1}(\max_{v \in \sigma, \sigma \in \Delta}c_{\sigma} > k  ).\] 
     \item Maximal eigenvector-centrality of a vertex $\sigma$ is higher than $k$:
     \[\mathds{1}(c_{\sigma}^{C^*}> k  ).\]
    \end{itemize}
    \item Using multiple types of simplicial complexes like \v{C}ech complex and Vietoris-Rips complex and combine features depending on several simplicial complexes in one pattern. Consider different metrics and choose the simplicial complexes that lead to the best results.
\end{enumerate}

\bibliography{simplicial-complex-paper-bibliography}{}
\bibliographystyle{siam}

\end{document}